\providecommand{\U}[1]{\protect \rule{.1in}{.1in}}
\newtheorem{theorem}{Theorem}
\theoremstyle{plain}
\newtheorem{corollary}{Corollary}
\newtheorem{example}{Example}
\newtheorem{proposition}{Proposition}
\numberwithin{equation}{section}
\begin{document}
\title[Korovkin type theorem via Power Series Methods]{On Korovkin-Type Theorems Including Exponential Test Functions on infinite intervals through Power Series Convergence}
\author{Dilek Söylemez$^1$}
\author{Mehmet Ünver$^2$}

\footnote{Department of Mathematics, Faculty of Science, Selcuk University,
Selcuklu, 42003 Konya, Turkiye, dsozden@gmail.com\\ $^{2}$Department of Mathematics, Faculty of Science, Ankara University,
Ankara, Turkiye, munver@ankara.edu.tr}

\subjclass{40A35, 40G10, 41A36}
\keywords{Power series convergence, integral summability, Korovkin type theorem, rate of
the convergence}

\begin{abstract}
Approximation theory has long been concerned with the development of positive linear operators that effectively approximate classes of functions. Among the most well-known results in this area are Korovkin-type approximation theorems, which provide simple and elegant criteria for convergence by testing only on a small set of functions. Motivated by these classical results and their extensions, we focus on versions that preserve exponential functions and incorporate modern summability techniques. In this paper, we establish Korovkin-type theorems that preserve exponential functions by employing power series convergence and a special case thereof. By considering approximation through Borel-type power series convergence via integral summability, we provide an alternative framework that applies in cases where classical convergence or ordinary Borel convergence fails, and we offer a comparative analysis of the corresponding theorems. We also present illustrative examples in which the classical results fail, while the proposed approach remains applicable. In addition, the rate of convergence is analyzed through the modulus of continuity.
\end{abstract}
\maketitle

\section{Introduction}

In the classical Korovkin theorem \cite{altomare, korovkin}, established on compact intervals, the convergence of a sequence of positive linear operators is guaranteed by testing the functions $1$, $\xi$, and $\xi^{2}$. This highlights the central role of polynomials as a test set within compact domains. However, exponential functions, characterized by their unique growth rates, cannot be exactly represented by any finite polynomial on the real axis. Despite this, they can be effectively approximated in calculus and are widely used across various research fields. To extend the scope of Korovkin-type results beyond compact intervals, Boyanov and Veselinov \cite{boyanov-veselinov} established a theorem that preserves the behavior of exponential functions. This refinement is particularly advantageous on infinite intervals, as it enables the study of positive linear operators whose action is naturally tied to exponential growth, ensuring that the sequence of operators not only approximates simple polynomials but also more complex functions like exponentials.


The approximation results obtained via classical convergence for operators preserving exponential functions were studied by several mathematicians. For example, Acar et al. \cite{acar-aral-gonska} studied Szász–Mirakyan operators that preserve constants and $e^{2ax}$ for $a>0$ in the framework of the Mediterranean Journal of Mathematics. Later, Acar et al. \cite{acar-aral-} introduced a different modification of Szász-Mirakyan-type operators that fix exponential functions and defined a new weighted modulus of continuity, while Acu and Tachev \cite{acu-tachev} proposed another new variant of the Szász–Mirakyan operator which preserve $e^{ax}$ for $a\in%
\mathbb{R}
.$ Further contributions include Aral et al. \cite{aral-inoan-rasa}, who analyzed the weighted approximation properties of Szász–Mirakyan operators preserving exponential functions, and Gupta and Aral \cite{gupta-aral}, who considered Szász–Mirakyan Kantorovich type operators preserving $e^{-x}$. In addition, Phillips operators preserving exponential functions have also been investigated in\cite {gupta-tachev}. Baskakov-type operators preserving exponential functions have also been investigated in \cite{ozsarac-acar, ovgu-ali, gupta-acu }.


The study of exponential Korovkin-type results was studied by means of some summability methods and significantly stronger results were obtained in them. Mursaleen and Mohiuddine
\cite{musaleen-moh} presented Korovkin-type theorems, using almost convergence and statistical convergence. Y\i ld\i z
et.al \cite{yildiz} introduced Korovkin-type theorems,\ via power series statistical convergence. Since
summability methods are highly valuable when the ordinary convergence of
positive linear operators to a function fails, they have been extensively
employed in approximation theory (see e.g., \cite{dinar-kadim, duman2007, gadjiev-orhan, soylemez-un2021, soy-guven, sumbul-yildirim, ulucay-un, unver-orhan2019, yildiz-demirci-2025 })

In the works of Holhoş \cite{holhos, holhos2018}, the  rate of approximation of functions on infinite intervals by means of positive linear operators was studied, yielding quantitative convergence estimates that are crucial for practical applications in functional analysis. These contributions demonstrate the ongoing development of Korovkin-type approximation theory, which has gradually expanded to encompass more complex functions and diverse mathematical settings, thereby highlighting the central role of preserving exponential functions within this framework. In particular, Holhoş \cite{holhos} established rates of convergence involving exponential and rational functions by considering specific Korovkin subsets. These subsets are given by $\{ f_{j} \}_{j=0}^{2}$ and $\{ g_{j} \}_{j=1}^{2}$, where $f_{j}(\xi)=e^{-j\xi}$ and $g_{j}(\xi)=\left(\frac{\xi}{1+\xi}\right)^{j}$. Such results have further enriched the theoretical foundations and practical scope of positive linear operators.


In this paper, we address existing gaps in Korovkin-type approximation theory by focusing on operators that preserve exponential functions beyond the scope of classical convergence. While earlier results mainly concentrate on compact intervals and polynomial test functions, many positive linear operators naturally arise on unbounded domains where exponential functions play a central role. Moreover, classical convergence often fails for certain operator sequences, making it necessary to employ summability techniques as an alternative. Motivated by these challenges, we first establish Korovkin-type theorems via power series convergence and its special case, the Borel method. We then advance the study by considering approximation through Borel-type power series convergence via integral summability, which provides a stronger framework in situations where neither classical nor ordinary Borel convergence applies. This combined approach not only broadens the applicability of Korovkin’s theorem to infinite intervals but also ensures more robust convergence results. Furthermore, we evaluate the convergence rate by employing the modulus of continuity and support our theoretical findings with explicit examples and graphical illustrations.


The outline of the paper is given as follows. In Section \ref{sec2}, we recall the preliminary concepts and notations required for the study. Section \ref{sec3} presents a Korovkin-type theorem preserving exponential functions via power series convergence; as a special case, we focus on the Borel method and provide an example that fails to converge in the classical sense. In Section \ref{sec4}, we establish a Korovkin-type theorem through approximation by Borel-type power series convergence via integral summability, and compare it with the previous results, illustrating the discussion with an example. Section \ref{sec5} is devoted to the rates of convergence, which are derived by means of the modulus of continuity under the proposed summability framework. Finally, Section \ref{sec6} concludes the paper with a summary of findings.


\section{Preliminaries}\label{sec2}
In this section, we recall some basic concepts and notations that will be frequently used throughout the paper. We begin by outlining the framework of power series convergence and its regularity conditions, highlighting the classical Abel and Borel convergence methods as particular cases. These tools provide the summability background necessary for our subsequent Korovkin-type approximation results. In addition, we introduce certain function spaces, namely $C[0,\infty)$ and $B[0,\infty)$, together with their subspaces consisting of functions having finite limits at infinity. Within this setting, we establish a novel proposition showing that $B_{\ast}[0,\infty)$, equipped with the supremum norm, forms a Banach space. This result extends the functional analytic foundations available for Korovkin-type theorems on unbounded intervals and will play a crucial role in the development of our main results.

We start with the power series convergence and its regularity  \cite{boos}. Consider a real sequence $\{\rho_{m}\}$ satisfying $\rho_{0}>0$ and $\rho_{m}\geq0$ for all $m\geq1$, and let
\[
\rho(y)=\sum_{m=0}^{\infty}\rho_{m}y^{m}
\]
denote the associated power series, whose radius of convergence is $R$ with $0<R\leq\infty$. If we have
\[
\lim_{0<y\rightarrow R^{-}}\frac{1}{\rho(y)}\sum_{m=0}^{\infty}x_{m}\rho
_{m}y^{m}=L,
\]
then we say that the number sequence $x=\left(  x_{m}\right)  $ is convergent in the sense of
power series method $P_{\rho}$, shortly $P$.

It is important to recall a fundamental criterion for the regularity of power series methods. Regularity ensures that the method is consistent with ordinary convergence, and thus plays a central role in summability theory.

\begin{theorem}\cite{boos}
A power series method $P$ is regular if and only if, for every $m=0,1,...$,
\[
\lim_{0<y\rightarrow R^{-}}\frac{\rho_{m}y^{m}}{\rho(y)}=0.
\]

\end{theorem}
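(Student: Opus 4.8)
The plan is to read this off from the general regularity (Toeplitz-type) criterion for summability kernels. For each $y\in(0,R)$ set $a_{y,m}:=\dfrac{\rho_m y^m}{\rho(y)}$, so that the $P$-transform of a sequence $x=(x_m)$ at the parameter $y$ is $\sum_{m=0}^{\infty}a_{y,m}x_m$. Since $\rho_0>0$ and $\rho_m\ge 0$, one has $a_{y,m}\ge 0$ and $\sum_{m=0}^{\infty}a_{y,m}=\dfrac{1}{\rho(y)}\sum_{m=0}^{\infty}\rho_m y^m=1$ for every such $y$. Thus the kernel is, for each fixed $y$, a probability distribution on $\{0,1,2,\dots\}$; in the language of the Silverman--Toeplitz conditions the uniform $\ell^1$-bound and the ``row sums tend to $1$'' requirement hold automatically, and the only condition that can fail is that each column tends to $0$, i.e. $\lim_{0<y\to R^-}a_{y,m}=0$. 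So the statement reduces to showing that this single condition is exactly what regularity yields and needs.

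For necessity, I would test $P$ on the unit sequences $e^{(k)}=(e^{(k)}_m)_m$ defined by $e^{(k)}_k=1$ and $e^{(k)}_m=0$ for $m\ne k$. Each $e^{(k)}$ converges to $0$, and its $P$-transform at $y$ is the (trivially convergent) sum $\dfrac{1}{\rho(y)}\sum_{m}e^{(k)}_m\rho_m y^m=\dfrac{\rho_k y^k}{\rho(y)}$, a well-defined real number for every $y\in(0,R)$. If $P$ is regular, then $e^{(k)}$ is $P$-summable to $\lim e^{(k)}=0$, which is precisely the asserted limit for $m=k$.

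For sufficiency, assume $\lim_{0<y\to R^-}\dfrac{\rho_m y^m}{\rho(y)}=0$ for every $m$, and let $x_m\to L$. Boundedness of $(x_m)$, say $|x_m|\le M$, together with $\rho_m\ge 0$ gives $\sum_m|x_m|\rho_m y^m\le M\rho(y)<\infty$ for $y\in(0,R)$, so the $P$-transform is well-defined there. Using $\sum_m\rho_m y^m=\rho(y)$ I would write
\[
\frac{1}{\rho(y)}\sum_{m=0}^{\infty}x_m\rho_m y^m-L=\frac{1}{\rho(y)}\sum_{m=0}^{\infty}(x_m-L)\rho_m y^m,
\]
fix $\varepsilon>0$, choose $N$ with $|x_m-L|<\varepsilon$ for $m>N$, and split the series at $N$. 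The tail is bounded in absolute value by $\varepsilon\,\dfrac{1}{\rho(y)}\sum_{m>N}\rho_m y^m\le\varepsilon$ uniformly in $y$ (here the probability-measure structure makes the estimate automatic), while the head $\dfrac{1}{\rho(y)}\sum_{m=0}^{N}(x_m-L)\rho_m y^m$ is a finite sum each of whose terms tends to $0$ as $y\to R^-$ by hypothesis, hence is $<\varepsilon$ for $y$ close enough to $R$. Therefore the left-hand side is $<2\varepsilon$ near $R$, giving $P\text{-}\lim x=L$.

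I do not expect a serious obstacle: the content is the observation that the kernel is nonnegative and normalized, which collapses the three Toeplitz conditions to the single column condition. The only points requiring a little care are the well-definedness of the $P$-transform on bounded inputs (where $\rho_m\ge 0$ is used) and the uniformity in $y$ of the tail estimate; both are consequences of $\sum_m a_{y,m}=1$. Alternatively, one may simply quote the theorem as the continuous-parameter instance of the Toeplitz theorem for families of nonnegative summability kernels.
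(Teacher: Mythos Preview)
Your argument is correct and is the standard proof of this classical regularity criterion: necessity via the unit sequences $e^{(k)}$, sufficiency via the head/tail split exploiting that $(a_{y,m})_m$ is a probability distribution for each $y$. There is nothing to compare against, however, because the paper does not prove this theorem; it is merely recalled from \cite{boos} as background material, with no proof supplied. So your write-up in fact goes beyond what the paper does here.
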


Various Korovkin-type approximation theorems based on power series convergence
have been studied in the literature, particularly in \cite{tas-atlihan, tas-yrd-atl2018, yurd-2016}.  Power series convergence includes
Abel and Borel convergence. In the context of Abel convergence, relevant
Korovkin-type theorems are provided in \cite{unver, unver2}
Additional results concerning different function spaces and applications for different operators can be found in
\cite {atlihan-unver,sakaoglu-unver, soy-unver2017, soylemez-unver-2020,soy-2025}. 

Abel and Borel convergence, as specific types of
power series convergence, are briefly reviewed below.

Assume that $\rho_{m}=1$ for any $m=0,1,...$, in this case $R=1$ and $\rho(y)=\dfrac{1}{1-y}.$ Thus the
power series convergence reduces to the Abel convergence. Let $x=\{x_{m}\}$
be a real sequence. If the series
\begin{equation*}
\sum_{m=0}^{\infty}x_{m}y^{m} \label{1d1}%
\end{equation*}
is convergent for any $y\in(0,1)$ and%
\[
\lim_{0<y\rightarrow1^{-}}(1-y)\sum_{m=0}^{\infty}x_{m}y^{m}=L,
\]
then $x$ is said to be Abel convergent to real number $L$ \cite{boos, powell}.

Assume that $\rho_{m}=\dfrac{1}{m!},$ in this case $R=\infty$ and $\rho(y)=e^{y}.$ Thus the power series convergence reduces to the
Borel convergence. Let $x=(x_{m})$ be a real sequence. If the series
\begin{equation*}
\sum_{m=0}^{\infty}x_{m}y^{m}%
\end{equation*}
is convergent for any $y>0$ and
\[
\lim_{0<y\rightarrow \infty}e^{-y}\sum_{m=0}^{\infty}\frac{x_{m}}{m!}%
y^{m}=L,
\]
then $x$ is said to be the Borel convergent to the real number $L$. \cite{boos, powell}

Now, we recall that $C[0,\infty)$ is the space of all real-valued and
continuous functions on $[0,\infty)$ and $B[0,\infty)$ is the space of all
real valued and bounded functions on $[0,\infty)$. These spaces are Banach
spaces with the norm

\begin{equation}
||\phi||_{\infty}=\sup_{\xi \ge 0}|\phi(\xi)|.\label{1f}%
\end{equation}
The subspaces of these spaces that ensure $\displaystyle\lim_{\xi \rightarrow \infty}\phi(\xi)$ exists
are denoted by $C_{\divideontimes}[0,\infty)$ and $B_{\divideontimes}[0,\infty)$,
respectively. It is well known that $C_{\divideontimes}[0,\infty)$ is a Banach space with the norm (\ref{1f}) (see, e.g., \cite{holhos, holhos2018}) According to the proposition below, $B_{\divideontimes}[0,\infty),$ is also a Banach space.

\begin{proposition}
\label{pro1}$B_{\divideontimes}[0,\infty)$ is a Banach space with the norm
(\ref{1f})
\end{proposition}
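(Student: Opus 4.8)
The plan is to exhibit $B_{\divideontimes}[0,\infty)$ as a closed linear subspace of the Banach space $B[0,\infty)$; since a closed subspace of a complete normed space is itself complete in the inherited norm, this yields the claim at once.

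First I would verify that $B_{\divideontimes}[0,\infty)$ is a linear subspace of $B[0,\infty)$. If $\phi,\psi\in B_{\divideontimes}[0,\infty)$ with $\lim_{\xi\to\infty}\phi(\xi)=a$ and $\lim_{\xi\to\infty}\psi(\xi)=b$, then for any scalars $\alpha,\beta$ the function $\alpha\phi+\beta\psi$ is bounded and $\lim_{\xi\to\infty}(\alpha\phi+\beta\psi)(\xi)=\alpha a+\beta b$ exists, so $\alpha\phi+\beta\psi\in B_{\divideontimes}[0,\infty)$.

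The core step is closedness. Let $\{\phi_n\}$ be a sequence in $B_{\divideontimes}[0,\infty)$ with $\|\phi_n-\phi\|_\infty\to 0$ for some $\phi\in B[0,\infty)$; I must show $\phi\in B_{\divideontimes}[0,\infty)$, i.e.\ that $\lim_{\xi\to\infty}\phi(\xi)$ exists in $\mathbb{R}$. Put $L_n:=\lim_{\xi\to\infty}\phi_n(\xi)$. Letting $\xi\to\infty$ in the inequality $|\phi_n(\xi)-\phi_m(\xi)|\le\|\phi_n-\phi_m\|_\infty$ gives $|L_n-L_m|\le\|\phi_n-\phi_m\|_\infty$; since $\{\phi_n\}$ is $\|\cdot\|_\infty$-Cauchy (being convergent), $\{L_n\}$ is a Cauchy sequence of real numbers and hence $L_n\to L$ for some $L\in\mathbb{R}$. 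A standard three-$\varepsilon$ estimate then concludes the argument: given $\varepsilon>0$, fix $n$ with $\|\phi_n-\phi\|_\infty<\varepsilon/3$ and $|L_n-L|<\varepsilon/3$, then choose $\Xi\ge 0$ such that $|\phi_n(\xi)-L_n|<\varepsilon/3$ for all $\xi>\Xi$; for such $\xi$,
\[
|\phi(\xi)-L|\le|\phi(\xi)-\phi_n(\xi)|+|\phi_n(\xi)-L_n|+|L_n-L|<\varepsilon,
\]
so $\lim_{\xi\to\infty}\phi(\xi)=L$ and $\phi\in B_{\divideontimes}[0,\infty)$.

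I do not expect a genuine obstacle here; the only point demanding a little care is the interchange of the two limiting processes — one must first establish that the sequence $\{L_n\}$ of limits-at-infinity is Cauchy (hence convergent) before running the three-$\varepsilon$ argument — but this follows cleanly from the uniform estimate $|L_n-L_m|\le\|\phi_n-\phi_m\|_\infty$. Alternatively, one could work directly with a Cauchy sequence in $B_{\divideontimes}[0,\infty)$ instead of invoking closedness, but the reduction to ``closed subspace of a Banach space'' is the most economical route, given that $B[0,\infty)$ is already known to be complete under $\|\cdot\|_\infty$.
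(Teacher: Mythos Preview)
Your proposal is correct and follows essentially the same route as the paper: reduce to showing $B_{\divideontimes}[0,\infty)$ is a closed linear subspace of the Banach space $B[0,\infty)$, prove the sequence $\{L_n\}$ of limits at infinity is Cauchy via $|L_n-L_m|\le\|\phi_n-\phi_m\|_\infty$, and conclude with the same three-$\varepsilon$ estimate. The arguments are virtually identical in structure and detail.
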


\begin{proof}
Linearity of $B_{\divideontimes}[0,\infty)$ is immediate. Since $B[0,\infty)$
is a Banach space, it suffices to show that $B_{\divideontimes}[0,\infty)$ is
closed with respect to the norm \eqref{1f}. Let $\phi$ belong to the closure of
$B_{\divideontimes}[0,\infty)$. Then there exists a sequence
$\{\phi_{n}\}\subset B_{\divideontimes}[0,\infty)$ such that
$\|\phi_{n}-\phi\|_{\infty}\to 0$. Thus, $\phi$ is bounded. As $\{\phi_{n}\}$ is Cauchy, for any $\varepsilon>0$ there exists $n_{0}$ such that
\[
\|\phi_{n}-\phi_{m}\|_{\infty}<\varepsilon
\]
whenever  $n,m\geq n_{0}$. For each $n$, let $\displaystyle\lim_{\xi\to\infty}\phi_{n}(\xi)=L_{n}$. Then, for $n,m\geq n_{0}$,
\[
|L_{n}-L_{m}|=\lim_{\xi\to\infty}|\phi_{n}(\xi)-\phi_{m}(\xi)|
   \leq \|\phi_{n}-\phi_{m}\|_{\infty}<\varepsilon.
\]
Thus $\{L_{n}\}$ is a  real Cauchy sequence and converges to some
finite limit $L$. Since $\phi_{n}\to\phi$ uniformly and $L_{n}\to L$, there exists $n_{1}$ such that
\[
\|\phi_{n}-\phi\|_{\infty}<\tfrac{\varepsilon}{3}
\quad\text{and}\quad
|L_{n}-L|<\tfrac{\varepsilon}{3}
\]
whenever $n\geq n_{1}$.
Fix $n\geq n_{1}$. As $\displaystyle\lim_{\xi\to\infty}\phi_{n}(\xi)=L_{n}$, there exists $M>0$ with
\[
|\phi_{n}(\xi)-L_{n}|<\tfrac{\varepsilon}{3} 
\]
whenever $\xi\geq M$.
Hence, for all $\xi\geq M$, we have
\[
|\phi(\xi)-L|
  \leq |\phi(\xi)-\phi_{n}(\xi)| + |\phi_{n}(\xi)-L_{n}| + |L_{n}-L|
  < \varepsilon.
\]
Therefore $\lim_{\xi\to\infty}\phi(\xi)=L$, showing that
$\phi\in B_{\divideontimes}[0,\infty)$. Consequently,
$B_{\divideontimes}[0,\infty)$ is closed.
\end{proof}

\section{Approximation via power series convergence}\label{sec3}

In this section, we introduce a Korovkin type theorem on infinite intervals via power series convergence. Due to the fact that many
linear positive operators are defined on unbounded intervals, Theorem
\ref{theorem2} can be useful in this regard. Before we proceed, we state a Korovkin-type theorem involving classical convergence that include exponential test functions, originally established in \cite{boyanov-veselinov}.

\begin{theorem}
\label{boyanov}Let $\{ \Re_{m}\}$ be a sequence of linear positive operators
from $C_{\divideontimes}[0,\infty)$ into $C_{\divideontimes}[0,\infty)$
satisfying the condition
\[
\lim_{m\rightarrow \infty}\left \Vert \Re_{m}  \phi_{\nu}
-\phi_{\nu}\right \Vert _{\infty}=0
\]
for any $\nu=0,1,2$, then for any $\phi \in C_{\divideontimes}[0,\infty)\ $ we
have
\[
\lim_{m\rightarrow \infty}\left \Vert \Re_{m}  \phi   -\phi
\right \Vert _{\infty}=0,
\]
where $\phi_{\nu}\left(  t\right)  =e^{-\nu t}$ for $\nu=0,1,2$.
\end{theorem}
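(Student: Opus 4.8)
The plan is to reduce the assertion to the classical Korovkin theorem on the compact interval $[0,1]$ by means of the substitution $u=e^{-t}$. First I would fix $\phi\in C_{\divideontimes}[0,\infty)$ and observe that the function $\psi(u):=\phi(-\ln u)$, $u\in(0,1]$, extends to a continuous function on the whole of $[0,1]$ once we set $\psi(0):=\lim_{t\to\infty}\phi(t)$, which exists precisely because $\phi\in C_{\divideontimes}[0,\infty)$. Being continuous on a compact interval, $\psi$ is uniformly continuous and bounded there. Translating uniform continuity back through $u=e^{-t}$, I obtain: for every $\varepsilon>0$ there is $\delta>0$ such that $|\phi(t)-\phi(s)|<\varepsilon$ whenever $|e^{-t}-e^{-s}|<\delta$, uniformly for all $t,s\in[0,\infty)$.

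The second step is the standard positive-operator majorization. When $|e^{-t}-e^{-s}|\ge\delta$ one has the crude bound $|\phi(t)-\phi(s)|\le 2\|\phi\|_{\infty}\le \tfrac{2\|\phi\|_{\infty}}{\delta^{2}}\left(e^{-t}-e^{-s}\right)^{2}$, so that, combining both cases, for all $t,s\ge 0$,
\[
-\varepsilon-\tfrac{2\|\phi\|_{\infty}}{\delta^{2}}\left(e^{-t}-e^{-s}\right)^{2}\;\le\;\phi(t)-\phi(s)\;\le\;\varepsilon+\tfrac{2\|\phi\|_{\infty}}{\delta^{2}}\left(e^{-t}-e^{-s}\right)^{2}.
\]
Since $\left(e^{-t}-e^{-s}\right)^{2}=\phi_{2}(t)-2e^{-s}\phi_{1}(t)+e^{-2s}\phi_{0}(t)$ as a function of $t$, and $\phi(s)=\phi(s)\phi_{0}(t)$, I would apply $\Re_{m}$ in the variable $t$, using linearity and positivity, to get
\[
\left|\Re_{m}(\phi)(s)-\phi(s)\,\Re_{m}(\phi_{0})(s)\right|\le \varepsilon\,\Re_{m}(\phi_{0})(s)+\tfrac{2\|\phi\|_{\infty}}{\delta^{2}}\Big(\Re_{m}(\phi_{2})(s)-2e^{-s}\Re_{m}(\phi_{1})(s)+e^{-2s}\Re_{m}(\phi_{0})(s)\Big).
\]

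The third step is to insert the hypotheses. Writing $\Re_{m}(\phi_{\nu})=\phi_{\nu}+\eta_{m,\nu}$ with $\|\eta_{m,\nu}\|_{\infty}\to0$ for $\nu=0,1,2$, the \emph{main part} of the last bracket is $\phi_{2}(s)-2e^{-s}\phi_{1}(s)+e^{-2s}\phi_{0}(s)=e^{-2s}-2e^{-2s}+e^{-2s}=0$, while the remainder is bounded in absolute value by $\|\eta_{m,2}\|_{\infty}+2\|\eta_{m,1}\|_{\infty}+\|\eta_{m,0}\|_{\infty}$ because $0\le e^{-s}\le1$ and $0\le e^{-2s}\le1$. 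Similarly $\phi(s)\,\Re_{m}(\phi_{0})(s)=\phi(s)+\phi(s)\eta_{m,0}(s)$ since $\phi_{0}\equiv1$, and $\Re_{m}(\phi_{0})(s)\le1+\|\eta_{m,0}\|_{\infty}$. Collecting these estimates, taking the supremum over $s\ge0$, and then $\limsup_{m\to\infty}$, every term carrying an $\eta_{m,\nu}$ vanishes, leaving $\limsup_{m}\|\Re_{m}\phi-\phi\|_{\infty}\le\varepsilon$; as $\varepsilon>0$ is arbitrary, the conclusion follows.

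The only delicate point — and hence the main obstacle — is the first step: one must use that $\phi\in C_{\divideontimes}[0,\infty)$, i.e. the existence of a finite limit at infinity, to upgrade mere continuity on the noncompact half-line to a single $\delta$ that works uniformly in both $t$ and $s$. Without the limit at infinity the substitution $u=e^{-t}$ would only deliver continuity on $(0,1]$ rather than uniform continuity on $[0,1]$, and the uniform majorization above would fail. Everything after that is routine, and it is worth noting that the boundedness of the coefficients $e^{-s}$ and $e^{-2s}$ by $1$ is exactly what makes all the resulting estimates automatically uniform in $s$, so no separate argument for uniformity is needed at the end.
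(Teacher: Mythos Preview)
Your proof is correct. The paper does not actually prove Theorem~\ref{boyanov} itself---it is quoted from \cite{boyanov-veselinov}---but the proof of the power-series analogue, Theorem~\ref{theorem2}, follows exactly the strategy you outline: the key inequality $|\phi(t)-\phi(\xi)|\le\varepsilon+\tfrac{2H}{\eta^{2}}(e^{-t}-e^{-\xi})^{2}$ followed by linearity/positivity of the operators and the expansion of $(e^{-t}-e^{-\xi})^{2}$ in terms of $\phi_{0},\phi_{1},\phi_{2}$. Your explicit justification of the uniform-continuity step via the substitution $u=e^{-t}$ and the compact extension to $[0,1]$ is a welcome addition; the paper simply asserts this step with ``It is easy to see that\ldots''.
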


We are now prepared to establish the following Korovkin-type approximation theorem.

\begin{theorem}
\label{theorem2}Let $\left \{  \Re_{m}\right \}  $ be a sequence of linear
positive operators from $C_{\divideontimes}[0,\infty)\ $ into
$B_{\divideontimes}[0,\infty)\ $such that $%
{\displaystyle \sum \limits_{m=0}^{\infty}}
\left \Vert \Re_{m}(\phi_{0})\right \Vert y^{m}<\infty$ for any $0<y<R$. If 
\begin{equation}
\lim_{0<y\rightarrow R^{-}}\frac{1}{p(y)}\left \Vert \sum \limits_{m=0}^{\infty
}(  \Re_{m}  \phi_{\nu}  -\phi_{\nu})
y^{m}\right \Vert _{\infty}=0 \label{2a}%
\end{equation}
for any\ $\nu=0,1,2$, then for any $\phi \in C_{\divideontimes
}[0,\infty)\ $ we have
\begin{equation}
\lim_{0<y\rightarrow R^{-}}\frac{1}{p(y)}\left \Vert \sum \limits_{m=0}^{\infty
}\left(  \Re_{m}  \phi   -\phi \right)  y^{m}\right \Vert _{\infty
}=0. \label{2a1}%
\end{equation}

\end{theorem}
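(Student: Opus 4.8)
The plan is to adapt the proof of the classical Boyanov--Veselinov theorem (Theorem~\ref{boyanov}) to the power series setting; the only genuinely new issue is how the single‑$m$ Korovkin estimate is summed against the weights $\rho_{m}y^{m}$ so that hypothesis~\eqref{2a} becomes usable. Throughout, $p(y)=\sum_{m}\rho_{m}y^{m}$.

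\textbf{Comparison inequality and positivity.} Fix $\phi\in C_{\divideontimes}[0,\infty)$ and $\varepsilon>0$. Since $\phi$ has a finite limit at infinity, the map $u\mapsto\phi(-\ln u)$ extends to a continuous, hence uniformly continuous, function on the compact interval $[0,1]$, so there is $\delta>0$ with $|\phi(t)-\phi(s)|<\varepsilon$ whenever $|e^{-t}-e^{-s}|<\delta$; for $|e^{-t}-e^{-s}|\ge\delta$ one uses $|\phi(t)-\phi(s)|\le 2\|\phi\|_{\infty}\le\frac{2\|\phi\|_{\infty}}{\delta^{2}}(e^{-t}-e^{-s})^{2}$. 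Writing $K=2\|\phi\|_{\infty}/\delta^{2}$ and recalling $\phi_{0}\equiv1$, this gives, for all $t,s\ge0$,
\[
\bigl|\phi(t)-\phi(s)\phi_{0}(t)\bigr|\le\varepsilon\phi_{0}(t)+K\bigl(e^{-s}-e^{-t}\bigr)^{2},
\]
exactly the estimate used by Boyanov and Veselinov. Applying the positive linear operator $\Re_{m}$ in the variable $t$ (positivity gives $|\Re_{m}g|\le\Re_{m}|g|$ together with monotonicity) and evaluating at $s$, then using $(e^{-s}-e^{-t})^{2}=e^{-2s}\phi_{0}(t)-2e^{-s}\phi_{1}(t)+\phi_{2}(t)$ with $e^{-2s}\phi_{0}(s)-2e^{-s}\phi_{1}(s)+\phi_{2}(s)=0$ and $\Re_{m}\phi_{\nu}=\phi_{\nu}+(\Re_{m}\phi_{\nu}-\phi_{\nu})$, one obtains
\[
\bigl|\Re_{m}\phi(s)-\phi(s)\Re_{m}\phi_{0}(s)\bigr|\le\varepsilon\,\Re_{m}\phi_{0}(s)+K\,\Re_{m}\!\bigl((e^{-s}-e^{-\cdot})^{2}\bigr)(s),
\]
whose right‑hand side is $\ge0$, and where $\Re_{m}\!\bigl((e^{-s}-e^{-\cdot})^{2}\bigr)(s)=e^{-2s}(\Re_{m}\phi_{0}-\phi_{0})(s)-2e^{-s}(\Re_{m}\phi_{1}-\phi_{1})(s)+(\Re_{m}\phi_{2}-\phi_{2})(s)$.

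\textbf{Summation.} The relevant series converge absolutely in the Banach space $B_{\divideontimes}[0,\infty)$ (Proposition~\ref{pro1}): positivity yields $\|\Re_{m}\psi\|_{\infty}\le\|\psi\|_{\infty}\|\Re_{m}\phi_{0}\|_{\infty}$ for every $\psi\in C_{\divideontimes}[0,\infty)$, so the summability hypothesis on $\|\Re_{m}\phi_{0}\|$ dominates $\sum_{m}\|\Re_{m}\phi-\phi\|_{\infty}\rho_{m}y^{m}$ and the analogous series for $\phi_{0},\phi_{1},\phi_{2}$. Multiplying the second displayed bound by $\rho_{m}y^{m}\ge0$ and summing over $m$, the triangle inequality loses nothing because the dominating expression is a sum of nonnegative terms; substituting $\sum_{m}\Re_{m}\phi_{\nu}(s)\rho_{m}y^{m}=\phi_{\nu}(s)p(y)+\sum_{m}(\Re_{m}\phi_{\nu}-\phi_{\nu})(s)\rho_{m}y^{m}$ makes every $\phi_{\nu}$‑term either survive as $\varepsilon\,p(y)$ or cancel exactly. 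Adding the remaining piece $|\phi(s)|\cdot\bigl|\sum_{m}(\Re_{m}\phi_{0}-\phi_{0})(s)\rho_{m}y^{m}\bigr|\le\|\phi\|_{\infty}A_{0}(y)$, where $A_{\nu}(y):=\bigl\|\sum_{m}(\Re_{m}\phi_{\nu}-\phi_{\nu})\rho_{m}y^{m}\bigr\|_{\infty}$, and taking the supremum over $s$ (using $0\le e^{-s},e^{-2s}\le1$) yields a constant $C=C(\varepsilon,\|\phi\|_{\infty})$ with
\[
\Bigl\|\sum_{m}(\Re_{m}\phi-\phi)\rho_{m}y^{m}\Bigr\|_{\infty}\le\varepsilon\,p(y)+C\bigl(A_{0}(y)+A_{1}(y)+A_{2}(y)\bigr).
\]
Dividing by $p(y)$ and letting $y\to R^{-}$, hypothesis~\eqref{2a} forces $A_{\nu}(y)/p(y)\to0$ for $\nu=0,1,2$, so $\limsup_{y\to R^{-}}\frac{1}{p(y)}\|\sum_{m}(\Re_{m}\phi-\phi)\rho_{m}y^{m}\|_{\infty}\le\varepsilon$; since $\varepsilon>0$ is arbitrary, \eqref{2a1} follows.

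\textbf{Expected main obstacle.} The delicate point is the summation step: the naive per‑$m$ estimate $|(\Re_{m}\phi-\phi)(s)|\le\varepsilon+C\sum_{\nu}|(\Re_{m}\phi_{\nu}-\phi_{\nu})(s)|$ is useless here, because \eqref{2a} controls the norm of $\sum_{m}(\Re_{m}\phi_{\nu}-\phi_{\nu})\rho_{m}y^{m}$ (absolute value \emph{outside} the $m$‑sum) and not $\sum_{m}|(\Re_{m}\phi_{\nu}-\phi_{\nu})(s)|\rho_{m}y^{m}$. The fix is to keep, for each $m$, the dominating quantity in the form ``$\Re_{m}$ of a fixed nonnegative test function'' ($\varepsilon\phi_{0}$ together with $K(e^{-s}-e^{-\cdot})^{2}$); only then does summing in $m$ produce power series transforms of $\phi_{0},\phi_{1},\phi_{2}$ whose $\phi_{\nu}$‑parts cancel against multiples of $p(y)$, leaving precisely the three quantities $A_{\nu}(y)$ that \eqref{2a} makes vanish.
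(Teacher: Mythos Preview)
Your proof is correct and follows essentially the same route as the paper's: both derive the Boyanov--Veselinov pointwise bound $|\phi(t)-\phi(\xi)|\le\varepsilon+\tfrac{2H}{\eta^{2}}(e^{-t}-e^{-\xi})^{2}$, apply the positive operators $\Re_{m}$, multiply by $\rho_{m}y^{m}$ and sum, then split each $\sum_{m}\Re_{m}\phi_{\nu}\,\rho_{m}y^{m}$ into $\phi_{\nu}\,p(y)$ plus the difference controlled by~\eqref{2a}. Your write-up is in fact more explicit than the paper's on the point you flag as the ``main obstacle'' --- that one must keep the per-$m$ majorant as $\Re_{m}$ of a fixed nonnegative test function before summing, so that the absolute value stays \emph{outside} the $m$-sum and \eqref{2a} is directly applicable --- and you track the surviving $\varepsilon\,p(y)$ term cleanly through to the $\limsup$ argument, whereas the paper suppresses it in the displayed inequalities.
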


\begin{proof}
Suppose that $\phi \in C_{\divideontimes}[0,\infty),$ then there exists
$H>0$ such that $\left \vert \phi(\xi)\right \vert \leq H$ for $\xi \in
\lbrack0,\infty).$ So we get%
\begin{equation}
\left \vert \phi \left(  t\right)  -\phi \left(  \xi \right)  \right \vert \leq2H,
\label{2b}%
\end{equation}
for $\xi \in \lbrack0,\infty).$ It is easy to see that for a given
$\varepsilon>0$ there exists $\eta>0$ such that
\[
\left \vert \phi \left(  t\right)  -\phi \left(  \xi \right)  \right \vert
<\varepsilon
\]
whenever $\left \vert e^{-t}-e^{-\xi}\right \vert <\eta$. Thus, we reach to%
\begin{equation}
\left \vert \phi \left(  t\right)  -\phi \left(  \xi \right)  \right \vert
\leq \varepsilon+\frac{2H}{\eta^{2}}\left(  e^{-t}-e^{-\xi}\right)  ^{2}.
\label{5d}%
\end{equation}
Using $\left(
\ref{5d}\right)  $ in the operators, we have
\begin{align}
\left \vert \Re_{m}\left(  \phi \left(  t\right)  ;\xi \right)  -\phi \left(
\xi \right)  \right \vert  &  \leq \left(  \Re_{m}\left(  \left \vert \phi \left(
t\right)  -\phi \left(  \xi \right)  \right \vert ;\xi \right)  \right)
\label{5d1}\\
&  +H\left \vert \left(  \Re_{m}\left(  \phi_0(t);\xi \right)  -\phi_0(\xi)\right)  \right \vert
.\nonumber
\end{align}
Thus, we obtain for all $y\in(0,R)$ that
\begin{align*}
\frac{1}{p(y)}\left \vert \sum \limits_{m=0}^{\infty}\left(  \Re_{m}\left(
\phi \left(  t\right)  ;\xi \right)  -\phi \left(  \xi \right)  \right)
p_{m}y^{m}\right \vert  &  \leq \frac{1}{p(y)}\sum \limits_{m=0}^{\infty}\left(
\Re_{m}\left(  \left \vert \phi \left(  t\right)  -\phi \left(  \xi \right)
\right \vert ;\xi \right)  \right)  p_{m}y^{m}\\
&  +H\frac{1}{p(y)}\left \vert \sum \limits_{m=0}^{\infty}\left(  \Re_{m}\left(
\phi_{0}(t);\xi \right)  -\phi_{0}(\xi) \right)  p_{m}y^{m}\right \vert.
\end{align*}
From (\ref{2a}), it follows that the second term of the last sum tends to zero as $y \to R^-$.
Since for any non-negative integer $m$ the operator $\left \{  \Re_{m}\right \}  $  is linear and positive  we obtain from (\ref{2a}) that
\begin{align*}
\frac{1}{p(y)}\sum \limits_{m=0}^{\infty}\left(  \Re_{m}\left(  \left \vert
\phi \left(  t\right)  -\phi \left(  \xi \right)  \right \vert ;\xi \right)
\right)  p_{m}y^{m} &  \leq \left(  \varepsilon+\frac{2H}{\eta^{2}}\right)
\frac{1}{p(y)}\left \vert \sum \limits_{m=0}^{\infty}\left(  \Re_{m}\left(
1;\xi \right)  -1\right)  p_{m}y^{m}\right \vert \\
&  +\frac{4H}{\eta^{2}}\frac{1}{p(y)}\left \vert \sum \limits_{m=0}^{\infty
}\left(  \Re_{m}\left(  \left(  e^{-t}\right)  ;\xi \right)  -e^{-\xi}\right)
p_{m}y^{m}\right \vert \\
&  +\frac{2H}{\eta^{2}}\frac{1}{p(y)}\left \vert \sum \limits_{m=0}^{\infty
}\left(  \Re_{m}\left(  e^{-2t};\xi \right)  -e^{-2\xi}\right)  p_{m}%
y^{m}\right \vert 
\end{align*}
for all $y\in \left(  0,R\right)  $. Then we obtain
\begin{align*}
\frac{1}{p(y)}\left \Vert \sum \limits_{m=0}^{\infty}\left(  \Re_{m}\phi
-\phi \right)  p_{m}y^{m}\right \Vert _{\infty} &  \leq K\left \{  \frac{1}%
{p(y)}\left \Vert \sum \limits_{m=0}^{\infty}\left(  \Re_{m}\phi_{2}-\phi
_{2}\right)  p_{m}y^{m}\right \Vert _{\infty}\right.  \\
&  +\frac{1}{p(y)}\left \Vert \sum \limits_{m=0}^{\infty}\left(  \Re_{m}\phi
_{1}-\phi_{1}\right)  p_{m}y^{m}\right \Vert _{\infty}\\
&  +\left.  \frac{1}{p(y)}\left \Vert \sum \limits_{m=0}^{\infty}\left(  \Re
_{m}\phi_{0}-\phi_{0}\right)  p_{m}y^{m}\right \Vert _{\infty}\right \}  ,
\end{align*}
where $K=\max \left \{  \varepsilon+\dfrac{2H}{\eta^{2}},\dfrac{4H}{\eta^{2}%
}\right \}  .$ Hence, we arrive that
\[
\lim_{0<y\rightarrow R^{-}}\frac{1}{p(y)}\sum \limits_{m=0}^{\infty}\left(
\Re_{m}\left(  \left \vert \phi \left(  t\right)  -\phi \left(  \xi \right)
\right \vert ;\xi \right)  \right)  p_{m}y^{m}=0,
\]
which ends the proof.
\end{proof}

In the following, we give a consequence of Theorem \ref{theorem2} \ via Borel convergence.

\begin{corollary}
\label{Borel corollary copy(1)}Let $\left \{  \Re_{m}
\right \} $ be a sequence of linear positive operators from $C_{\divideontimes
}[0,\infty)\ $to $B_{\divideontimes}[0,\infty)\ $such that $%
{\displaystyle \sum \limits_{m=0}^{\infty}}
\left \Vert \Re_{m}\phi_{0}\right \Vert y^{m}<\infty$ for any $y>0$. If%
\begin{equation}
\lim_{0<y\rightarrow \infty}e^{-y}\left \Vert \sum \limits_{m=0}^{\infty}(
\Re_{m}\left(  \phi_{\nu}\right)  -\phi_{\nu})  \frac{y^{m}}%
{m!}\right \Vert _{\infty}=0, \label{6a}%
\end{equation}
for all $\nu=0,1,2$, then for any $\phi \in C_{\divideontimes
}[0,\infty)\ $we have
\[
\lim_{0<y\rightarrow \infty}e^{-y}\left \Vert \sum \limits_{m=0}^{\infty}\left(
\Re_{m}  \phi   -\phi \right)  \frac{y^{m}}{m!}\right \Vert
_{\infty}=0.
\]

\end{corollary}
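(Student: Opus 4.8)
The plan is to deduce Corollary \ref{Borel corollary copy(1)} directly from Theorem \ref{theorem2} by specializing the power series method $P_{\rho}$ to the Borel method. As recalled in Section \ref{sec2}, the Borel method is the power series method generated by $\rho_{m}=\frac{1}{m!}$, $m=0,1,\dots$; this is a legitimate power series method since $\rho_{0}=1>0$ and $\rho_{m}\geq 0$, its generating function is $\rho(y)=\sum_{m=0}^{\infty}\frac{y^{m}}{m!}=e^{y}$, and its radius of convergence is $R=\infty$. In particular $\frac{1}{\rho(y)}=e^{-y}$, and for each fixed $m$ one has $\lim_{y\to\infty}\frac{\rho_{m}y^{m}}{\rho(y)}=\lim_{y\to\infty}\frac{y^{m}}{m!\,e^{y}}=0$, so the method is regular.

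First I would verify that the standing hypotheses of Theorem \ref{theorem2} hold under the assumptions of the Corollary. With $R=\infty$, the requirement ``$\sum_{m=0}^{\infty}\|\Re_{m}(\phi_{0})\|\,y^{m}<\infty$ for any $0<y<R$'' is exactly the hypothesis ``for any $y>0$'' imposed in the Corollary. Since $\frac{1}{m!}\leq 1$ for every $m\geq 0$, this also forces $\sum_{m=0}^{\infty}\|\Re_{m}(\phi_{0})\|\frac{y^{m}}{m!}<\infty$ for all $y>0$; combined with the positivity of the operators and the inequalities $0\leq \phi_{2}\leq \phi_{1}\leq \phi_{0}$ on $[0,\infty)$, this guarantees the absolute convergence of every series of the form $\sum_{m=0}^{\infty}\Re_{m}(\phi_{\nu};\xi)\frac{y^{m}}{m!}$ that appears in the proof of Theorem \ref{theorem2}. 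Next, substituting $\rho_{m}=\frac{1}{m!}$ and $\rho(y)=e^{y}$ into \eqref{2a} turns it into
\[
\lim_{0<y\to\infty}e^{-y}\left\| \sum_{m=0}^{\infty}\bigl(\Re_{m}(\phi_{\nu})-\phi_{\nu}\bigr)\frac{y^{m}}{m!}\right\|_{\infty}=0,\qquad \nu=0,1,2,
\]
which is precisely \eqref{6a}.

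Hence all hypotheses of Theorem \ref{theorem2} are met for this choice of power series method, and its conclusion \eqref{2a1}, with $\rho(y)=e^{y}$, reads
\[
\lim_{0<y\to\infty}e^{-y}\left\| \sum_{m=0}^{\infty}\bigl(\Re_{m}\phi-\phi\bigr)\frac{y^{m}}{m!}\right\|_{\infty}=0
\]
for every $\phi\in C_{\divideontimes}[0,\infty)$, which is the assertion of the Corollary. Since the argument is a straightforward specialization, no genuine difficulty arises; the only point demanding a little attention is the bookkeeping between the general normalization $\frac{1}{\rho(y)}$ and the explicit factor $e^{-y}$, together with checking that the hypothesis $\sum_{m}\|\Re_{m}(\phi_{0})\|y^{m}<\infty$ indeed supplies the absolute convergence of each series entering the estimates behind Theorem \ref{theorem2}.
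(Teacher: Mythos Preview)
Your proposal is correct and matches the paper's own treatment: the paper presents this corollary simply as ``a consequence of Theorem \ref{theorem2} via Borel convergence'' with no separate proof, and your argument is exactly the intended specialization $\rho_{m}=1/m!$, $\rho(y)=e^{y}$, $R=\infty$. The extra bookkeeping you provide (regularity check, absolute convergence via $1/m!\leq 1$) is accurate and more detailed than what the paper spells out, but the approach is the same.
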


The following example demonstrates the existence of a sequence of positive linear 
operators that satisfies the conditions of Theorem~\ref{theorem2}, 
but does not fulfill the conditions of Theorem~\ref{boyanov}.

\begin{example}
Consider the sequence $\left \{  \sigma_{m}\right \}  $ defined by%
\[
\sigma_{m}=\left \{
\begin{array}
[c]{rll}%
1 & , & \text{if }m=2k\\
-1 & , & \text{if }m=2k+1,\text{ }k=0,1,...
\end{array}
\right.
\]
and the sequence of linear positive operators $\left \{  \mathcal{L}%
_{m}\right \}  $ defined for $m=0,1,...$ by
\[%
\mathcal{L}%
_{m}\phi=(1+\sigma_{m})S_{m+1}\phi \text{,}%
\]
where $\left \{  S_{m}\right \}  $ is the sequence of the Sz\'{a}sz-Mirakjan
operators defined from $C_{\divideontimes}[0,\infty)$ to itself defined by%
\begin{equation}
S_{m}(\phi(t);\xi)=e^{-m\xi}\sum \limits_{k=0}^{\infty}\phi \left(  \frac{k}%
{m}\right)  \frac{(m\xi)^{k}}{k!}\label{szasz}%
\end{equation}
for $\xi \in \lbrack0,\infty)$. We know that $\left \{  S_{m}\right \}  $ satisfies (see;
\cite{holhos2018})%
\begin{equation}
S_{m}(\phi_{0};\xi)=1,\label{SZASZ1}%
\end{equation}%
\begin{equation}
S_{m}(\phi_{1};\xi)=e^{m\xi(e^{-\frac{1}{m}}-1)},\label{szasz-2}%
\end{equation}
and%
\begin{equation}
S_{m}(\phi_{2};\xi)=e^{m\xi(e^{-\frac{2}{m}}-1)}.\label{szasz-3}%
\end{equation}
Thus, we get%
\[%
\mathcal{L}%
_{m}(\phi_{0}\left(  t\right)  ;\xi)=(1+\sigma_{m}),
\]%
\[%
\mathcal{L}%
_{m}(\phi_{1}\left(  t\right)  ;\xi)=\left(  1+\sigma_{m})\right)  e^{\left(
m+1\right)  \xi(e^{-\frac{1}{m+1}}-1)},
\]
and%
\[%
\mathcal{L}%
_{m}(\phi_{2}\left(  t\right)  ;\xi)=\left(  1+\sigma_{m}\right)  e^{\left(
m+1\right)  \xi(e^{-\frac{2}{m+1}}-1)}\text{.}%
\]
As $\left \{  \sigma_{m}\right \}  $ diverges $\left \{  \mathcal{L}_{m}\right \}
$ does not satisfy the conditions of Theorem \ref{boyanov}. On the other hand,
since $\left \{  \sigma_{m}\right \}  $ is a Borel null sequence $\left \{
\mathcal{L}%
_{m}\right \}  $ holds (\ref{6a}). From Theorem \ref{theorem2} we obtain%
\[
\lim_{0<y\rightarrow \infty}e^{-y}\left \Vert \left(
\mathcal{L}%
_{m}(\phi)-\phi \right)  \frac{y^{m}}{m!}\right \Vert _{\infty}=0.
\]

\end{example}

\section{Approximation by Borel-type power series convergence via integral
summability}\label{sec4}

In this section, we employ integral summability methods to establish a Korovkin-type theorem that preserves exponential functions. This result enables us to handle sequences on infinite intervals that fail to converge either in the classical sense or under a Borel-type power series method.

Let $P$ be a non-polynomial Borel-type power series method (that is,
$R=\infty$) and let $\left \{  \Re_{m}\right \}  $ be a sequence of positive
linear operators from $C_{\divideontimes}[0,\infty)$ to $B_{\divideontimes
}[0,\infty)$ such that
\begin{equation}
M=\sup_{y>0}\frac{1}{p(y)}\sum_{m=0}^{\infty}\left \Vert \Re_{m}\phi
_{0}\right \Vert _{\infty}p_{m}y^{m}<\infty.\label{1}%
\end{equation}
Then for any $y>0$ the operator $\mathcal{V}_{P,\Re}^{y}:C_{\divideontimes
}[0,\infty)\rightarrow B_{\divideontimes}[0,\infty)$ defined by
\begin{equation}
\mathcal{V}_{P,\Re}^{y}\phi \left(  \xi \right)  =\frac{1}{p(y)}\sum
_{m=0}^{\infty}\Re_{m}(\phi(y);\xi)p_{m}y^{m}\label{3}%
\end{equation}
is a positive linear operator.

Note that $\mathcal{V}_{P,\Re}^{y}$ is well defined by (\ref{1}) for each
$y>0$. Since the pointwise limit of a sequence of measurable functions (which,
in this case, converges uniformly) is itself measurable, it follows that for
any $\xi \in \lbrack0,\infty)$, the function $\left(  \mathcal{V}_{P,\Re}%
^{y}\phi \right)  \left(  \xi \right)  $ is measurable with respect to the
variable $y$. Consequently, we can define a new positive linear operator by
employing the concept of integral summability. We now introduce a new positive linear operator constructed via integral summability.  
Let \( \mathcal{F} \) be a non-negative and regular integral summability method.  
For any \( s \in [0,\infty) \), the operator \( \mathcal{F}_{P,\Re}^{s} \) is defined by
\begin{equation}
\left(  \mathcal{F}_{P,\Re}^{s}\phi \right)  (\xi)={\displaystyle \int
\limits_{0}^{\infty}}\mathcal{F}(s,y)\left(  \mathcal{V}_{P,\Re}^{y}%
\phi \right)  \left(  \xi \right)  dy\label{22}%
\end{equation}
and it is again a positive linear operator. Since $\mathcal{F}$ is
non-negative and regular, it follows from \cite{connor} that
\begin{equation}
\lim_{s\rightarrow \infty}{\displaystyle \int \limits_{0}^{\infty}}%
\mathcal{F}(s,y)dy=1\text{.}\label{21}%
\end{equation}

Before presenting the Korovkin type theorem, it is necessary to provide the
following proposition.

\begin{proposition}
Let $\mathcal{F}$ be a non-negative regular integral summability method$,$ $P$
be a non-polynomial Borel-type power series method and $\left \{  \Re
_{m}\right \}  $ be a sequence of positive linear operators from
$C_{\divideontimes}[0,\infty)$ to $B_{\divideontimes}[0,\infty)$ that
satisfies (\ref{1}). Then for any $\phi \in C_{\divideontimes}[0,\infty)$ and
$s\in \mathbb{[}0,\infty \mathbb{)}$ we have $%
\mathcal{F}%
_{p,\Re}^{s}$ is a positive linear operator from $C_{\divideontimes}%
[0,\infty)$ to $B_{\divideontimes}[0,\infty).$
\end{proposition}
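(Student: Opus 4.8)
The plan is to verify, in this order: (i) that the series defining $\mathcal{V}_{P,\Re}^{y}$ converges uniformly in $\xi$ and produces an element of $B_{\divideontimes}[0,\infty)$ with norm bounded independently of $y$; (ii) that the integral in \eqref{22} converges absolutely and yields a bounded function; (iii) that this function has a finite limit at infinity; and (iv) that $\mathcal{F}_{P,\Re}^{s}$ is linear and positive, which will be immediate once (i)--(iii) are settled.

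For (i) I would first record that, since each $\Re_m$ is positive and $\phi_0\equiv 1$, one has $-\|\phi\|_\infty\phi_0\le\phi\le\|\phi\|_\infty\phi_0$ on $[0,\infty)$, hence $|\Re_m(\phi;\xi)|\le\|\phi\|_\infty(\Re_m\phi_0)(\xi)\le\|\phi\|_\infty\|\Re_m\phi_0\|_\infty$ for all $\xi$ and $m$. By \eqref{1}, $\sum_{m}\|\Re_m\phi_0\|_\infty p_m y^m\le Mp(y)<\infty$ for each $y>0$, so the Weierstrass $M$-test shows the series $\tfrac{1}{p(y)}\sum_m\Re_m(\phi;\xi)p_my^m$ converges uniformly in $\xi$, with $\|\mathcal{V}_{P,\Re}^{y}\phi\|_\infty\le M\|\phi\|_\infty$. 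Each partial sum lies in $B_{\divideontimes}[0,\infty)$ since $\Re_m\phi\in B_{\divideontimes}[0,\infty)$ by hypothesis; as that space is closed by Proposition~\ref{pro1}, the uniform limit $\mathcal{V}_{P,\Re}^{y}\phi$ lies in $B_{\divideontimes}[0,\infty)$. I would then set $\ell_\phi(y):=\lim_{\xi\to\infty}(\mathcal{V}_{P,\Re}^{y}\phi)(\xi)$.

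For (ii) and (iii), fix $s$. Since $\mathcal{F}$ is an integral summability method, $y\mapsto\mathcal{F}(s,y)$ is integrable on $[0,\infty)$, and the measurability in $y$ of $(\mathcal{V}_{P,\Re}^{y}\phi)(\xi)$ was already noted before the statement; combined with the bound $|\mathcal{F}(s,y)(\mathcal{V}_{P,\Re}^{y}\phi)(\xi)|\le M\|\phi\|_\infty\mathcal{F}(s,y)$ from (i), this shows the integral in \eqref{22} converges absolutely and $\|\mathcal{F}_{P,\Re}^{s}\phi\|_\infty\le M\|\phi\|_\infty\int_0^\infty\mathcal{F}(s,y)\,dy<\infty$. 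To obtain the limit at infinity I would apply the dominated convergence theorem in the variable $y$: as $\xi\to\infty$ the integrand converges pointwise in $y$ to $\mathcal{F}(s,y)\ell_\phi(y)$ and is dominated by the fixed $L^{1}$ function $M\|\phi\|_\infty\mathcal{F}(s,\cdot)$, so $\lim_{\xi\to\infty}(\mathcal{F}_{P,\Re}^{s}\phi)(\xi)=\int_0^\infty\mathcal{F}(s,y)\ell_\phi(y)\,dy$, a finite real number; hence $\mathcal{F}_{P,\Re}^{s}\phi\in B_{\divideontimes}[0,\infty)$.

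Finally, for (iv): linearity of $\mathcal{F}_{P,\Re}^{s}$ follows from linearity of each $\Re_m$, of the uniformly convergent series, and of the integral; positivity follows because $\phi\ge0$ forces $\Re_m\phi\ge0$, hence $(\mathcal{V}_{P,\Re}^{y}\phi)(\xi)\ge0$ (as $p_m\ge0$, $y^m>0$, $p(y)>0$) and then $(\mathcal{F}_{P,\Re}^{s}\phi)(\xi)\ge0$ since $\mathcal{F}(s,y)\ge0$. I expect the only genuinely delicate step to be the interchange of $\lim_{\xi\to\infty}$ with $\int_0^\infty(\cdot)\,dy$ in step (iii); it is exactly there that the uniform-in-$\xi$ majorant $M\|\phi\|_\infty\mathcal{F}(s,\cdot)$ and the integrability of $\mathcal{F}(s,\cdot)$ — built into the definition of an integral summability method, and consistent with \eqref{21} — do the work, together with the measurability in $y$ already recorded.
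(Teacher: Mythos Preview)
Your proposal is correct and follows essentially the same route as the paper: both arguments use condition \eqref{1} to get the uniform bound $\|\mathcal{V}_{P,\Re}^{y}\phi\|_\infty\le M\|\phi\|_\infty$, then apply dominated convergence with majorant $M\|\phi\|_\infty\,\mathcal{F}(s,\cdot)$ to interchange $\lim_{\xi\to\infty}$ with the integral, and deduce boundedness from the integrability of $\mathcal{F}(s,\cdot)$. The only cosmetic difference is that you invoke Proposition~\ref{pro1} to conclude $\mathcal{V}_{P,\Re}^{y}\phi\in B_{\divideontimes}[0,\infty)$ as a uniform limit in a closed subspace, whereas the paper computes $\lim_{\xi\to\infty}(\mathcal{V}_{P,\Re}^{y}\phi)(\xi)$ directly by swapping the limit with the uniformly convergent series; these are equivalent, and your packaging is arguably cleaner.
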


\begin{proof}
Linearity is clear from the definition involving integral and sums. For
positivity, we assume $\phi \in C_{\divideontimes}[0,\infty)$ with $\phi \geq0$.
Since the operators $\left \{  \Re_{m}\right \}  $ are positive, the kernel
$\mathcal{F}(s,y)$ is non-negative, and $p_{m},y^{m}\geq0$, every term in the
integral and series definition of the operator is non-negative, that is,
\[
\left(  \mathcal{F}_{P,\Re}^{s}\phi \right)  (\xi)=\int_{0}^{\infty}%
\mathcal{F}(s,y)\left(  \mathcal{V}_{P,\Re}^{y}\phi \right)  \left(
\xi \right)  dy=\int_{0}^{\infty}\mathcal{F}(s,y)\frac{1}{p(y)}\sum
_{m=0}^{\infty}\Re_{m}(\phi(y);\xi)p_{m}y^{m}dy\geq0.
\]
Thus, $\mathcal{F}_{P,\Re}^{s}$ is a positive operator. Define $b_{m}%
=\displaystyle \lim_{\xi \rightarrow \infty}\Re_{m}(\phi(y);\xi)$. From (\ref{1})
we get for any $y>0$ that%
\[
\sum_{m=0}^{\infty}\left \vert b_{m}\right \vert p_{m}y^{m}\leq \frac{\left \Vert
\phi \right \Vert _{\infty}}{p(y)}\sum_{m=0}^{\infty}\left \Vert \Re_{m}\phi
_{0}\right \Vert p_{m}y^{m}<\infty \text{,}%
\]
which yields that the series $\displaystyle \sum_{m=0}^{\infty}b_{m}p_{m}y^{m}$
converges for any $y>0$. Again from (\ref{1}) the series in (\ref{3})
converges uniformly in $\xi$ for any fixed $y$. So we have
\begin{align}
\lim_{\xi \rightarrow \infty}\left(  \mathcal{V}_{P,\Re}^{y}\phi \right)  (\xi)
&  =\frac{1}{p(y)}\sum_{m=0}^{\infty}\lim_{\xi \rightarrow \infty}\Re_{m}%
(\phi(y);\xi)p_{m}y^{m}\nonumber \\
&  =\frac{1}{p(y)}\sum_{m=0}^{\infty}b_{m}p_{m}y^{m}\nonumber \\
&  <\infty \label{6b}%
\end{align}
for all $y>0$. Moreover, we have
\begin{align}
\left \vert \left(  \mathcal{V}_{P,\Re}^{y}\phi \right)  \left(  \xi \right)
\right \vert  &  \leq \frac{1}{p(y)}\sum_{m=0}^{\infty}\left \vert \Re_{m}%
(\phi(y);\xi \right \vert )p_{m}y^{m}\nonumber \\
&  \leq \frac{\left \Vert \phi \right \Vert _{\infty}}{p(y)}\sum_{m=0}^{\infty
}\left \Vert \Re_{m}\phi_{0}\right \Vert p_{m}y^{m}\nonumber \\
&  \leq M\left \Vert \phi \right \Vert _{\infty}.\label{6b1}%
\end{align}
From (\ref{6b1}) we have
\begin{equation}
\left \vert \mathcal{F}(s,y)\left(  \mathcal{V}_{P,\Re}^{y}\phi \right)
\right \vert \leq M\left \Vert \phi \right \Vert _{\infty}\mathcal{F}%
(s,y).\label{6c}%
\end{equation}
Since the integrand is dominated by the integrable function $M\left \Vert
\phi \right \Vert \mathcal{F}(s,\cdot)$ in (\ref{6c}) and the pointwise limit
(\ref{6b}) exists, we apply the Dominated Convergence Theorem to interchange
the limit and the integral and have
\begin{align}
\lim_{\xi \rightarrow \infty}\left(  \mathcal{%
\mathcal{F}%
}_{p,\Re}^{s}\phi \right)  (\xi) &  =%
{\displaystyle \int \limits_{0}^{\infty}}
\mathcal{F}%
(s,y)\lim_{\xi \rightarrow \infty}\left(  \mathcal{V}_{P,\Re}^{y}\phi \right)
\left(  \xi \right)  dy\nonumber \\
&  =%
{\displaystyle \int \limits_{0}^{\infty}}
\mathcal{%
\mathcal{F}%
}(s,y)\frac{1}{p(y)}\sum_{m=0}^{\infty}b_{m}p_{m}y^{m}dy.\label{101}%
\end{align}
The existence of this limit is guaranteed by the application of DCT, as the
integral of the dominating function is finite. Finally, we show that
$\mathcal{F}_{P,\Re}^{s}\phi \in B_{\divideontimes}[0,\infty)$ by proving its
boundedness. Using (\ref{6c}), we write
\begin{align*}
\left \vert \left(
\mathcal{F}%
_{P,\Re}^{s}\phi \right)  (\xi)\right \vert  &  \leq%
{\displaystyle \int \limits_{0}^{\infty}}
\mathcal{F}%
(s,y)\left \vert \left(  \mathcal{V}_{P,\Re}^{y}\phi \right)  \left(
\xi \right)  \right \vert dy\\
&  =M\left \Vert \phi \right \Vert _{\infty}%
{\displaystyle \int \limits_{0}^{\infty}}
\mathcal{F}%
(s,y)dy\\
&  \leq M\left \Vert \phi \right \Vert _{\infty}K\\
&  <\infty \text{,}%
\end{align*}
where $K=\displaystyle \sup_{s\geq0}\displaystyle \int_{0}^{\infty}%
\mathcal{F}(s,y)dy<\infty$ is guaranteed by the regularity of $\mathcal{F}$.
The boundedness of $\left(  \mathcal{F}_{P,\Re}^{s}\phi \right)  (\xi)$ for all
$\xi,s\in \lbrack0,\infty)$ implies $\mathcal{F}_{P,\Re}^{s}\phi \in
B_{\divideontimes}[0,\infty)$. This completes the proof.
\end{proof}

If a sequence of positive linear operators fails to converge under a standard
Borel-type power series method, the following theorem provides an alternative
convergence criterion using an integral summability method.

\begin{theorem}
\label{theorem1}Let $P$ be a non-polynomial Borel-type power series method and
let $\left \{  \Re_{m}\right \}  $ be a sequence of positive linear operators
from $C_{\divideontimes}[0,\infty)$ to $B_{\divideontimes}[0,\infty)$ that
satisfies (\ref{1}). If for any $\nu=0,1,2$%
\begin{equation}
\lim_{s\rightarrow \infty}\left \Vert
\mathcal{F}%
_{P,\Re}^{s}\phi_{\nu}-\phi_{\nu}\right \Vert _{\infty}=0\text{,}\label{5}%
\end{equation}
then for any $\phi \in C_{\divideontimes}[0,\infty)$ we have
\begin{equation}
\lim_{s\rightarrow \infty}\left \Vert
\mathcal{F}%
_{P,\Re}^{s}\phi-\phi \right \Vert _{\infty}=0\label{4}%
\end{equation}

\end{theorem}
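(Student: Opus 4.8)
The plan is to mimic the standard Korovkin argument, but carried out for the single operator family $\mathcal{F}_{P,\Re}^{s}$ rather than for the original sequence $\{\Re_m\}$. Fix $\phi \in C_{\divideontimes}[0,\infty)$. Since $\phi$ is bounded there is $H>0$ with $|\phi(\xi)|\le H$ for all $\xi\ge 0$, hence $|\phi(t)-\phi(\xi)|\le 2H$. Using the change of variable $u=e^{-t}$, $v=e^{-\xi}$, continuity of $\phi$ translates into: for every $\varepsilon>0$ there is $\eta>0$ such that $|e^{-t}-e^{-\xi}|<\eta$ implies $|\phi(t)-\phi(\xi)|<\varepsilon$. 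Combining the two estimates in the usual way yields the pointwise bound
\begin{equation*}
|\phi(t)-\phi(\xi)| \le \varepsilon + \frac{2H}{\eta^{2}}\bigl(e^{-t}-e^{-\xi}\bigr)^{2},
\end{equation*}
valid for all $t,\xi\ge 0$. Expanding $\bigl(e^{-t}-e^{-\xi}\bigr)^{2}=e^{-2t}-2e^{-\xi}e^{-t}+e^{-2\xi}$ rewrites the right-hand side as a linear combination of $\phi_0(t)=1$, $\phi_1(t)=e^{-t}$ and $\phi_2(t)=e^{-2t}$ with coefficients depending only on $\xi$ (through $e^{-\xi}$) and bounded uniformly in $\xi$.

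Next I would apply the positive linear operator $\mathcal{F}_{P,\Re}^{s}$ (in the variable $t$) to this inequality. Positivity and linearity give
\begin{equation*}
\bigl|\mathcal{F}_{P,\Re}^{s}\phi(\xi)-\phi(\xi)\bigr|
\le \mathcal{F}_{P,\Re}^{s}\bigl(|\phi(t)-\phi(\xi)|;\xi\bigr)
 + H\,\bigl|\mathcal{F}_{P,\Re}^{s}\phi_0(\xi)-\phi_0(\xi)\bigr|,
\end{equation*}
and then bounding the first term via the displayed pointwise estimate produces
\begin{equation*}
\bigl|\mathcal{F}_{P,\Re}^{s}\phi(\xi)-\phi(\xi)\bigr|
\le \Bigl(\varepsilon + \tfrac{2H}{\eta^{2}}\Bigr)\bigl|\mathcal{F}_{P,\Re}^{s}\phi_0(\xi)-\phi_0(\xi)\bigr|
 + \tfrac{4H}{\eta^{2}}\bigl|\mathcal{F}_{P,\Re}^{s}\phi_1(\xi)-\phi_1(\xi)\bigr|
 + \tfrac{2H}{\eta^{2}}\bigl|\mathcal{F}_{P,\Re}^{s}\phi_2(\xi)-\phi_2(\xi)\bigr| + \varepsilon,
\end{equation*}
where the extra $\varepsilon$ absorbs the $\mathcal{F}_{P,\Re}^{s}\phi_0$ term that multiplies $\varepsilon$ once we pass to the sup norm (using boundedness of $\mathcal{F}_{P,\Re}^{s}\phi_0$, which follows from \eqref{1} together with \eqref{21}, or more simply by noting $\|\mathcal{F}_{P,\Re}^{s}\phi_0-\phi_0\|_\infty\to 0$). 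Taking the supremum over $\xi\ge 0$, setting $K=\max\{\varepsilon+\tfrac{2H}{\eta^{2}},\tfrac{4H}{\eta^{2}}\}$, and then letting $s\to\infty$, each of the three norms $\|\mathcal{F}_{P,\Re}^{s}\phi_\nu-\phi_\nu\|_\infty$ tends to $0$ by the hypothesis \eqref{5}. Hence $\limsup_{s\to\infty}\|\mathcal{F}_{P,\Re}^{s}\phi-\phi\|_\infty \le \varepsilon$, and since $\varepsilon>0$ was arbitrary we conclude \eqref{4}.

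The only genuine subtlety — the point I would be most careful about — is justifying that $\mathcal{F}_{P,\Re}^{s}$ really is a well-defined positive linear operator on $C_{\divideontimes}[0,\infty)$ to which the Korovkin scheme applies, and in particular that the linear combination bound survives the two limiting processes (the series over $m$ defining $\mathcal{V}_{P,\Re}^{y}$ and the integral over $y$ defining $\mathcal{F}_{P,\Re}^{s}$). This is exactly what the preceding proposition secures: condition \eqref{1} gives uniform absolute convergence of the series $\sum_m \Re_m(\phi(y);\xi)p_m y^m$ and the uniform bound $\|\mathcal{V}_{P,\Re}^{y}\phi\|_\infty \le M\|\phi\|_\infty$, which in turn dominates the integrand in \eqref{22} by $M\|\phi\|_\infty \mathcal{F}(s,y)$, an integrable function; so $\mathcal{F}_{P,\Re}^{s}$ maps $C_{\divideontimes}[0,\infty)$ into $B_{\divideontimes}[0,\infty)$ and is positive and linear. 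Once this framework is in place, applying $\mathcal{F}_{P,\Re}^{s}$ term-by-term to the pointwise inequality is legitimate, and the rest is the routine $\varepsilon/\eta$ bookkeeping sketched above. I would therefore structure the write-up as: (i) recall from the proposition that $\mathcal{F}_{P,\Re}^{s}$ is a positive linear operator $C_{\divideontimes}\to B_{\divideontimes}$; (ii) derive the pointwise estimate on $|\phi(t)-\phi(\xi)|$; (iii) apply the operator, pass to the sup norm, and take $s\to\infty$.
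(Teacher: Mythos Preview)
Your proof is correct and follows essentially the same route as the paper: derive the pointwise inequality $|\phi(t)-\phi(\xi)|\le \varepsilon+\tfrac{2H}{\eta^{2}}(e^{-t}-e^{-\xi})^{2}$, apply the positive linear operator $\mathcal{F}_{P,\Re}^{s}$, expand the square in terms of $\phi_0,\phi_1,\phi_2$, and pass to the sup norm using hypothesis \eqref{5}. The only cosmetic slip is that the coefficient of $|\mathcal{F}_{P,\Re}^{s}\phi_0-\phi_0|$ should be $\varepsilon+H+\tfrac{2H}{\eta^{2}}$ (the $H$ comes from the initial splitting step you wrote down but then dropped in the final display); this does not affect the conclusion.
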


\begin{proof}
Suppose that $\phi \in C_{\divideontimes}[0,\infty)$. Then there exists a
constant $H>0$ such that $\left \vert \phi(\xi)\right \vert \leq H$ for any
$\xi \in \lbrack0,\infty).$ So we get
\begin{equation}
\left \vert \phi \left(  t\right)  -\phi \left(  \xi \right)  \right \vert
\leq2H\label{7a}%
\end{equation}
for $\xi \in \lbrack0,\infty).$ It is easy to see that for a given
$\varepsilon>0$ there is a $\eta>0$ such that%
\[
\left \vert \phi \left(  t\right)  -\phi \left(  \xi \right)  \right \vert
<\varepsilon
\]
whenever $\left \vert e^{-t}-e^{-\xi}\right \vert <\eta$ for all $\xi \in
\lbrack0,\infty).$ Thus, we reach
\begin{equation}
\left \vert \phi \left(  t\right)  -\phi \left(  \xi \right)  \right \vert
\leq \varepsilon+\frac{2H}{\eta^{2}}\left(  e^{-t}-e^{-\xi}\right)
^{2}.\label{7b}%
\end{equation}
On the other hand, we can write
\begin{align}
\left \vert \Re_{m}\left(  \phi \left(  t\right)  ;\xi \right)  -\phi \left(
\xi \right)  \right \vert  &  \leq \left(  \Re_{m}\left(  \left \vert \phi \left(
t\right)  -\phi \left(  \xi \right)  \right \vert ;\xi \right)  \right)
\nonumber \\
&  +\left \Vert \phi \right \Vert _{\infty}\left \vert \left(  \Re_{m}\left(
1;\xi \right)  -1\right)  \right \vert .\label{7d}%
\end{align}
Applying the operators $\left \{  \Re_{m}\right \}  $ to $\left(  \ref{7b}%
\right)  $, and using the linearity and positivity of $\left \{  \Re
_{m}\right \}  $ with $\left(  \ref{7d}\right)  $, we obtain
\begin{align*}
\left \vert
\mathcal{F}%
_{P,\Re}^{s}\left(  \phi \left(  t\right)  ;\xi \right)  -\phi \left(
\xi \right)  \right \vert  &  \leq \varepsilon+\left(  \varepsilon+H+\frac
{2H}{\eta^{2}}m\right)  \left \vert
{\displaystyle \int \limits_{0}^{\infty}}
\mathcal{F}%
(s,y)\left(  \mathcal{V}_{P,\Re}^{y}\phi_{0}\right)  \left(  y\right)
dy-\phi_{0}(\xi)\right \vert \\
&  +\frac{4H}{\eta^{2}}\left \vert
{\displaystyle \int \limits_{0}^{\infty}}
\mathcal{F}%
(s,y)\left(  \mathcal{V}_{P,\Re}^{y}\phi_{1}\right)  \left(  y\right)
dy-\phi_{1}(\xi)\right \vert \\
&  +\frac{2H}{\eta^{2}}\left \vert
{\displaystyle \int \limits_{0}^{\infty}}
\mathcal{F}%
(s,y)\left(  \mathcal{V}_{P,\Re}^{y}\phi_{2}\right)  \left(  y\right)
dy-\phi_{2}(\xi)\right \vert
\end{align*}
yielding
\[
\lim_{s\rightarrow \infty}\left \Vert
\mathcal{F}%
_{P,\Re}^{s}\phi-\phi \right \Vert _{\infty}=0.
\]

\end{proof}

The regularity of $P$ and $\mathcal{F}$ guarantee that ($\ref{5}$) holds
whenever the conditions of the classical Korovkin theorem are met. However,
since Example $\ref{example2}$ shows that the converse is not generally true,
Theorem $\ref{theorem1}$ is demonstrated to be strictly stronger than both the
classical Korovkin theorem and Theorem $\ref{theorem2}$.

\begin{example}
\label{example2}Let $\alpha=\left \{  \alpha_{m}\right \}  $ denote a sequence
that is Abel convergent to $1$ but fails to be Borel convergent, and take into
account the Sz\'{a}sz--Mirakjan operators $\left \{  S_{m}\right \}  $ defined
in (\ref{szasz}). We introduce the positive linear operators $\Re_{m}$ by
\[
\Re_{m}=\alpha_{m}S_{m+1}%
\]
for any $m=0,1,...$ Since $\alpha$ does not converge in the classical sense,
(\ref{SZASZ1}) implies that $\left \{  \Re_{m}\phi_{0}\right \}  $ is not
uniformly convergent. Consequently, the classical Korovkin theorem cannot be
applied to the sequence $\left \{  \Re_{m}\right \}  $. Furthermore, because
$\alpha$ is not Borel convergence, (\ref{SZASZ1}) again ensures that $\left \{
\Re_{m}\phi_{0}\right \}  $ does not converge under Borel convergence.
Therefore, Theorem \ref{theorem2} is not valid in this context. On the other
hand, we consider the case where $\mathcal{F}$ is the Abel integral summability method, defined by
\[
\mathcal{F}(s,y)=\frac{1}{s}e^{-y/s},
\]
as introduced in \cite{connor}, and $P$ is the Borel power series method $B$.
In this case, the general operator defined in (\ref{22}) simplifies to
\[
(\mathcal{F}_{B,\Re}^{s}\phi)(\xi)=\frac{1}{s}{\displaystyle \int
\limits_{0}^{\infty}}e^{-y/s}e^{-y}\left(  \sum_{m=0}^{\infty}\Re_{m}%
(\phi(y);\xi)\frac{y^{m}}{m!}\right)  dy.
\]
By interchanging the integral and the sum, justified by the Lebesgue Monotone
Convergence Theorem, and using the Gamma function, this operator is further
reduced to a standard series-based form
\begin{equation}
(\mathcal{F}_{B,\Re}^{s}\phi)(\xi)=\frac{1}{s+1}\sum_{m=0}^{\infty}\left(
\frac{s}{s+1}\right)  ^{m}\Re_{m}(\phi(y);\xi).\label{24}%
\end{equation}
Now, by performing the substitution $y=\dfrac{s}{s+1}$, we get: $0<y<1$,
$y\rightarrow1^{-}$ as $s\rightarrow \infty$ and
\begin{equation}
\frac{1}{s+1}\sum_{m=0}^{\infty}\left(  \frac{s}{s+1}\right)  ^{m}\Re_{m}%
(\phi;\xi)=(1-y)\sum_{m=0}^{\infty}y^{m}\Re_{m}(\phi;\xi)\text{.}\label{7e}%
\end{equation}
From (\ref{SZASZ1}) and (\ref{7e}), we have%
\begin{align*}
\left \vert (\mathcal{F}_{B,\Re}^{s}\phi_{0})(\xi)-\phi_{0}\right \vert  &
=\left \vert (1-y)\sum_{m=0}^{\infty}\alpha_{m}y^{m}S_{m+1}(\phi_{0}%
(t);\xi)-\phi_{0}\right \vert \\
&  \leq(1-y)\sum_{m=0}^{\infty}y^{m}\left \vert \alpha_{m}-1\right \vert .
\end{align*}
By the hypothesis, we get%
\[
\lim_{s\rightarrow \infty}\left \Vert \mathcal{F}_{B,\Re}^{s}\phi_{0}-\phi
_{0}\right \Vert _{\infty}=0.
\]
For $\nu=1,$ by considering (\ref{szasz-2}), we can write%
\begin{align}
\left \vert \left(  \mathcal{F}_{B,\Re}^{s}\phi_{1}\right)  \left(  \xi \right)
-\phi_{1}\left(  \xi \right)  \right \vert  &  =\left \vert (1-y)\sum
_{m=0}^{\infty}y^{m}\alpha_{m}e^{-\xi \left(  m+1\right)  (1-e^{-\frac{1}{m+1}%
})}-e^{-\xi}\right \vert \nonumber \\
&  \leq \left \vert (1-y)\sum_{m=0}^{\infty}y^{m}\alpha_{m}e^{-\xi \left(
m+1\right)  (1-e^{-\frac{1}{m+1}})}-\alpha_{m}e^{-\xi}\right \vert \nonumber \\
&  +(1-y)\left \vert \sum_{m=0}^{\infty}y^{m}e^{-\xi}\left(  \alpha
_{m}-1\right)  \right \vert \nonumber \\
&  \leq(1-y)\sum_{m=0}^{\infty}y^{m}\alpha_{m}\left \vert e^{-\xi \left(
m+1\right)  (1-e^{-\frac{1}{m+1}})}-e^{-\xi}\right \vert \nonumber \\
&  +e^{-\xi}(1-y)\sum_{m=0}^{\infty}y^{m}\left \vert \alpha_{m}-1\right \vert
.\label{7f}%
\end{align}
If we define $\beta_{m}=m(1-e^{-\frac{1}{m}})$ for $m=1,2,...$, and take into
account the inequality (see; \cite{holhos2018} )%
\begin{align}
e^{-\xi \beta_{m}}-e^{-\xi} &  \leq \left(  \frac{1-\beta_{m}}{2}\right)
\left(  \xi e^{-\xi \beta_{m}}+\xi e^{-\xi}\right)  \nonumber \\
&  \leq \frac{1-\beta_{m}^{2}}{2e\beta_{m}},\label{7g}%
\end{align}
then we have%
\begin{align*}
\left \vert \left(  \mathcal{F}_{B,\Re}^{s}\phi_{1}\right)  \left(  \xi \right)
-\phi_{1}\left(  \xi \right)  \right \vert  &  \leq(1-y)\left \vert \sum
_{m=0}^{\infty}y^{m}\alpha_{m}\frac{1-\beta_{m+1}^{2}}{2e\beta_{m+1}%
}\right \vert \\
&  +(1-y)\sum_{m=0}^{\infty}y^{m}e^{-\xi}\left \vert \alpha_{m}-1\right \vert .
\end{align*}
By the hypothesis, we reach to%
\[
\lim_{s\rightarrow \infty}\left \Vert \mathcal{F}_{p,\Re}^{s}\phi_{1}-\phi
_{1}\right \Vert _{\infty}=0.
\]
For $\nu=2,$ by using (\ref{szasz-3})and (\ref{7e}), we can write%
\begin{align}
\left \vert \left(  \mathcal{F}_{B,\Re}^{s}\phi_{2}\right)  \left(  \xi \right)
-\phi_{2}\left(  \xi \right)  \right \vert  &  =\left \vert (1-y)\sum
_{m=0}^{\infty}y^{m}\alpha_{m}e^{-\xi \left(  m+1\right)  (1-e^{-\frac{2}{m+1}%
})}-e^{-2\xi}\right \vert \nonumber \\
&  \leq \left \vert (1-y)\sum_{m=0}^{\infty}y^{m}\alpha_{m}e^{-2\xi \frac{m+1}%
{2}(1-e^{-\frac{2}{m+1}})}-(1-y)\sum_{m=0}^{\infty}y^{m}\alpha_{m}e^{-2\xi
}\right \vert \nonumber \\
&  +\left \vert (1-y)\sum_{m=0}^{\infty}y^{m}e^{-2\xi}\left(  \alpha
_{m}-1\right)  \right \vert \nonumber \\
&  \leq(1-y)\sum_{m=1}^{\infty}y^{m}\alpha_{m}\left \vert e^{-2\xi \frac{m+1}%
{2}(1-e^{-\frac{2}{m+1}})}-e^{-2\xi}\right \vert \nonumber \\
&  +e^{-2\xi}(1-y)\sum_{m=0}^{\infty}y^{m}\left \vert \alpha_{m}-1\right \vert
.\label{7h}%
\end{align}
Taking $\eth _{m}=\dfrac{m}{2}(1-e^{-\frac{2}{m}}),$ and considering the
inequality given in \cite{holhos2018}, we have%
\begin{align}
e^{-2\xi \eth _{m}}-e^{-2\xi} &  \leq \left(  \frac{2-\eth _{m}}{2}\right)
\left(  \xi e^{-\xi \eth _{m}}+\xi e^{-2\xi}\right)  \nonumber \\
&  \leq \frac{4-\eth _{m}^{2}}{4e\eth _{m}}.\label{7k}%
\end{align}
By using (\ref{7k}) in (\ref{7h}), we get%
\begin{align*}
\left \vert \left(  \mathcal{F}_{B,\Re}^{s}\phi_{2}\right)  \left(  \xi \right)
-\phi_{2}\left(  \xi \right)  \right \vert  &  \leq(1-y)\left \vert \sum
_{m=0}^{\infty}y^{m}\alpha_{m}\left(  \frac{4-\eth _{m+1}^{2}}{4e\eth _{m+1}%
}\right)  \right \vert \\
&  +(1-y)\sum_{m=0}^{\infty}y^{m}e^{-2\xi}\left \vert \alpha_{m}-1\right \vert ,
\end{align*}
which implies%
\[
\lim_{s\rightarrow \infty}\left \Vert \mathcal{F}_{B,\Re}^{s}\phi_{2}-\phi
_{2}\right \Vert _{\infty}=0.
\]
Since conditions (\ref{5}) are satisfied, from Theorem \ref{theorem1} we have
the following.
\[
\lim_{s\rightarrow \infty}\left \Vert \mathcal{F}_{B,\Re}^{s}\phi-\phi
\right \Vert _{\infty}=0,
\]
for any $\phi \in C_{\divideontimes}[0,\infty)$.
\end{example}

In Example~\ref{exmp3}, we demonstrate the uniform convergence of 
\( \mathcal{F}_{B,\Re}^{s}\phi \) to \( \phi \in C_{\divideontimes}[0,\infty) \) 
as \( s \to \infty \) for a particular choice of a sequence of positive linear operators. 
It should be noted that the sequence of positive linear operators given in this example 
does not satisfy the conditions of the classical Korovkin theorem.

\begin{example}
\label{exmp3} Let us reconsider the Abel integral summability method \( \mathcal{F} \), 
the Borel power series method \( B \), and the sequence of 
Sz\'{a}sz--Mirakjan operators \( (S_{m}) \). 
We then define a new sequence of positive linear operators \( \left\{\Re_{m}\right\} \) by
\begin{equation}
\Re_{m}\phi=\left \{
\begin{array}
[c]{ll}%
0 & ,\text{ if }m\text{ is perfect square or }m=0\\
S_{m+1} & ,\text{ otherwise.}%
\end{array}
\right.  \label{23}%
\end{equation}
From (\ref{SZASZ1}) and (\ref{24}) we have%
\begin{equation}
(%
\mathcal{F}%
_{B,\Re}^{s}\phi_{0})(\xi)=\frac{1}{s+1}\left(  \sum_{m=0}^{\infty}\left(
\frac{s}{s+1}\right)  ^{m}-\sum_{k=0}^{\infty}\left(  \frac{s}{s+1}\right)
^{k^{2}}\right)  \text{.}\label{14}%
\end{equation}
From (3.13) in \cite{ulucay-un}, we can write
\begin{equation}
\lim_{s\rightarrow \infty}\frac{1}{s+1}\sum_{k=0}^{\infty}\left(  \frac{s}%
{s+1}\right)  ^{k^{2}}=\lim_{y\rightarrow1^{-}}\left(  1-y\right)  \sum
_{k=0}^{\infty}y^{k^{2}}=0.\label{15}%
\end{equation}
for $y=\dfrac{s}{s+1}$, such that $0<y<1.$ Thus, from (\ref{14}) and
(\ref{15}) we have%
\[
\left \vert
\mathcal{F}%
_{B,\Re}^{s}\phi_{0}-\phi_{0}\right \vert =\left \vert (1-y)\sum_{m=0}^{\infty
}y^{m}-\left(  1-y\right)  \sum_{k=0}^{\infty}y^{k^{2}}-1\right \vert ,
\]
which leads to%
\[
\lim_{s\rightarrow \infty}\left \Vert
\mathcal{F}%
_{B,\Re}^{s}\phi_{0}-\phi_{0}\right \Vert _{\infty}=0\text{.}%
\]
For $\nu=1$, by using (\ref{szasz-2}) and (\ref{7e}), we obtain$\ $%
\begin{align*}
(%
\mathcal{F}%
_{B,\Re}^{s}\phi_{1})(\xi) &  =(1-y)\sum_{m=0}^{\infty}y^{m}\Re_{m}(\phi
_{1}\left(  y\right)  ;\xi)-\left(  1-y\right)  \sum_{k=0}^{\infty}y^{k^{2}%
}\Re_{m}(\phi_{1}\left(  y\right)  ;\xi)\\
&  =(1-y)\sum_{m=0}^{\infty}y^{m}e^{-\xi \beta_{m+1}}-\left(  1-y\right)
\sum_{k=0}^{\infty}y^{k^{2}}e^{-\xi \beta_{k^{2}+1}}.
\end{align*}
Considering (\ref{7g}) for $\xi \in \lbrack0,\infty)$, we have%
\begin{align}
\left \vert \left(
\mathcal{F}%
_{B,\Re}^{s}\phi_{1}\right)  \left(  \xi \right)  -\phi_{1}\left(  \xi \right)
\right \vert  &  =\left \vert (1-y)\sum_{m=0}^{\infty}y^{m}e^{-\xi \beta_{m+1}%
}-\left(  1-y\right)  \sum_{k=0}^{\infty}y^{k^{2}}e^{-\xi \beta_{k^{2}+1}%
}-e^{-\xi}\right \vert \nonumber \\
&  =\left \vert (1-y)\sum_{m=0}^{\infty}y^{m}\left(  e^{-\xi \beta_{m+1}%
}-e^{-\xi}\right)  -\left(  1-y\right)  \sum_{k=0}^{\infty}y^{k^{2}}\left(
e^{-\xi \beta_{k^{2}+1}}-e^{-\xi}\right)  \right \vert \nonumber \\
&  +\left \vert e^{-\xi}\left \{  (1-y)\sum_{m=0}^{\infty}y^{m}-\left(
1-y\right)  \sum_{k=0}^{\infty}y^{k^{2}}-1\right \}  \right \vert \nonumber \\
&  \leq \left \vert (1-y)\sum_{m=0}^{\infty}y^{m}\left(  e^{-\xi \beta_{m+1}%
}-e^{-\xi}\right)  \right \vert +\left \vert \left(  1-y\right)  \sum
_{k=0}^{\infty}y^{k^{2}}\left(  e^{-\xi \beta_{k^{2}+1}}-e^{-\xi}\right)
\right \vert \nonumber \\
&  \leq \left \vert (1-y)\sum_{m=0}^{\infty}y^{m}c_{m+1}\right \vert +\left \vert
\left(  1-y\right)  \sum_{k=0}^{\infty}y^{k^{2}}\left(  e^{-\xi \beta_{k^{2}%
+1}}-e^{-\xi}\right)  \right \vert \nonumber \\
&  +\left \vert (1-y)\sum_{m=0}^{\infty}y^{m}-\left(  1-y\right)  \sum
_{k=0}^{\infty}y^{k^{2}}-1\right \vert ,\label{16}%
\end{align}
where%
\[
c_{m}=\left \{
\begin{array}
[c]{ll}%
\frac{1-\beta_{m}^{2}}{2e\beta_{m}}, & \text{if }m\text{ is perfect square}\\
0, & \text{otherwise,}%
\end{array}
\right.
\]
which is Abel convergent to zero since it is null. Thus, we have%
\[
\lim_{s\rightarrow \infty}\left \Vert
\mathcal{F}%
_{B,\Re}^{s}\phi_{1}-\phi_{1}\right \Vert _{\infty}=0\text{.}%
\]
For $\nu=2,$ considering (\ref{szasz-3}) and (\ref{24}), we have%
\begin{align*}
(%
\mathcal{F}%
_{B,\Re}^{s}\phi_{2})(\xi) &  =(1-y)\sum_{m=0}^{\infty}y^{m}\Re_{m}(\phi
_{2}\left(  y\right)  ;\xi)-\left(  1-y\right)  \sum_{k=0}^{\infty}y^{k^{2}%
}\Re_{m}(\phi_{2}\left(  y\right)  ;\xi)\\
&  =(1-y)\sum_{m=0}^{\infty}y^{m}e^{-\xi m(1-e^{-\frac{2}{m}})}-\left(
1-y\right)  \sum_{k=0}^{\infty}y^{k^{2}}e^{-\xi m(1-e^{-\frac{2}{m}})}%
\end{align*}
which leads to%
\begin{align}
\left \vert (%
\mathcal{F}%
_{B,\Re}^{s}\phi_{2})(\xi)-\phi_{2}(\xi)\right \vert  &  =\left \vert
(1-y)\sum_{m=0}^{\infty}y^{m}e^{-\xi \eth _{m+1}}-\left(  1-y\right)
\sum_{k=0}^{\infty}y^{k^{2}}e^{-\xi \eth _{k^{2}+1}}-e^{-2\xi}\right \vert
\nonumber \\
&  \pm e^{-2\xi}\left[  (1-y)\sum_{m=0}^{\infty}y^{m}-\left(  1-y\right)
\sum_{k=0}^{\infty}y^{k^{2}}\right]  \nonumber \\
&  =\left \vert (1-y)\sum_{m=0}^{\infty}y^{m}\left(  e^{-2\xi \eth _{m+1}%
}-e^{-2\xi}\right)  -\left(  1-y\right)  \sum_{k=0}^{\infty}y^{k^{2}}\left(
e^{-\xi \eth _{k^{2}+1}}-e^{-2\xi}\right)  \right \vert \nonumber \\
&  +\left \vert e^{-2\xi}\left \{  (1-y)\sum_{m=0}^{\infty}y^{m}-\left(
1-y\right)  \sum_{k=0}^{\infty}y^{k^{2}}-1\right \}  \right \vert \nonumber \\
&  \leq \left \vert (1-y)\sum_{m=0}^{\infty}y^{m}\left(  e^{-2\xi \eth _{m+1}%
}-e^{-2\xi}\right)  \right \vert +\left \vert \left(  1-y\right)  \sum
_{k=0}^{\infty}y^{k^{2}}\left(  e^{-\xi \eth _{k^{2}+1}}-e^{-2\xi}\right)
\right \vert \nonumber \\
&  \leq \left \vert (1-y)\sum_{m=0}^{\infty}y^{m}\frac{4-\eth _{m+1}^{2}%
}{4e\eth _{m+1}}\right \vert +\left \vert \left(  1-y\right)  \sum_{k=0}%
^{\infty}y^{k^{2}}\left(  e^{-\xi \eth _{k^{2}+1}}-e^{-2\xi}\right)
\right \vert \nonumber \\
&  +\left \vert (1-y)\sum_{m=0}^{\infty}y^{m}-\left(  1-y\right)  \sum
_{k=0}^{\infty}y^{k^{2}}-1\right \vert ,\label{18}%
\end{align}
Using similar modifications as in \cite{ulucay-un} we have from (\ref{18})
that%
\[
\lim_{s\rightarrow \infty}\left \Vert
\mathcal{F}%
_{B,\Re}^{s}\phi_{2}-\phi_{2}\right \Vert _{\infty}=0\text{.}%
\]
So, the hypotheses of Theorem \ref{theorem1} are satisfied and the proof is ended.
\end{example}

\color{black}

\section{Rates of the Convergence}\label{sec5}

In this section, we compute the rates of the convergences by means of the
modulus of continuity via summability methods given in previous two sections.

The modulus of continuity of $\omega \left(  \phi,\delta \right)  $ is defined
by%
\begin{equation}
\omega \left(  \phi,\delta \right)  =\sup_{\substack{\left \vert t-\xi \right \vert
\leq \delta \\0\leq t,\xi\leq a   }}\left \vert \phi \left(  t\right)
-\phi \left(  \xi \right)  \right \vert .\label{sur-mod}%
\end{equation}
By using Korovkin subset \{$1,e^{-\xi},$ $e^{-2\xi}$\}, Holho\c{s}
\cite{holhos} introduced the modulus of continuity given by
\begin{equation}
\hat{\omega}(\phi,\delta)=\sup_{\substack{t,\xi \geq0\\ \left \vert
_{e^{-t}-e^{-\xi}}\right \vert \leq \delta}}\left \vert \phi(t)-\phi
(\xi)\right \vert \label{ex-sur-mod}%
\end{equation}
for $\delta \geq0$ and every function $\phi \in C_{\divideontimes}[0,\infty)$.
It has a relationship with the usual modulus of continuity given in (\ref{sur-mod}) ( see;
\cite{holhos}). We obtain the property
\begin{equation}
\left \vert g(u)-g(v)\right \vert \leq(1+\dfrac{\left(  u-v\right)  ^{2}}%
{\delta^{2}})\hat{\omega}\left(  g,\delta \right)  .\label{5g}%
\end{equation}
By taking $u=e^{-t},$ $v=e^{-\xi},$ we have%
\begin{equation}
\left \vert \phi \left(  t\right)  -\phi \left(  \xi \right)  \right \vert
\leq \left(  1+\dfrac{\left(  e^{-t}-e^{-\xi}\right)  ^{2}}{\delta^{2}}\right)
\hat{\omega}(\phi,\delta).\label{5h}%
\end{equation}

The following theorem gives the rate of convergence via the power series method.

\begin{theorem}
\label{rate1} Let $\left \{  \Re_{m}\right \}  $ be a sequence of linear
positive operators from $C_{\divideontimes}[0,\infty)$ to $B_{\divideontimes
}[0,\infty)\ $ such that $%
{\displaystyle \sum \limits_{m=0}^{\infty}}
\left \Vert \Re_{m}\phi_{0}\right \Vert _{\infty}y^{m}<\infty$ for any $0<y<R$.
Assume that $\varsigma,$ $\lambda$ are positive real functions defined on
$[0,\infty)$. If \newline \textbf{i)} $\dfrac{1}{p(y)}\left \Vert
\displaystyle \sum_{m=0}^{\infty}(\Re_{m}\phi_{0}-\phi_{0})y^{m}\right \Vert
_{\infty}=o(\varsigma(y)),$ as $0<y\rightarrow R^{-}$,\newline \textbf{ii)
}$\hat{\omega}(\phi,\delta)=o(\lambda(y))$, as $0<y\rightarrow R^{-}$, where
$\delta=\sqrt{\left \Vert \Re_{m}\left(  \mu\right)  \right \Vert
_{\infty}}$ with $\mu(t)=\displaystyle \sup_{\xi \geq0}\left(
e^{-t}-e^{-\xi}\right)  ^{2}.$ Then, for any $\phi \in C_{\divideontimes
}[0,\infty)$ we have
\[
\frac{1}{p(y)}\left \Vert \left(  \Re_{m}\phi-\phi \right)  \frac{y^{m}}%
{m!}\right \Vert _{\infty}=o(\varphi(y))
\]
as $0<y\rightarrow R^{-}$, where $\varphi(y)=\displaystyle \max_{y>0}\left \{
\varsigma(y),\text{ }\lambda(y)\right \}  .$
\end{theorem}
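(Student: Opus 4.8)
The plan is to transport the Korovkin-type argument used in the proofs of Theorems~\ref{theorem2} and \ref{theorem1} to the rate setting, keeping track of the orders of magnitude throughout. Fix $\phi\in C_{\divideontimes}[0,\infty)$ and put $H=\left\Vert\phi\right\Vert_{\infty}$. Since $\phi_{0}\equiv 1$ and each $\Re_{m}$ is linear, one has the identity
\[
\Re_{m}(\phi(t);\xi)-\phi(\xi)=\Re_{m}\bigl(\phi(t)-\phi(\xi);\xi\bigr)+\phi(\xi)\bigl(\Re_{m}(\phi_{0};\xi)-1\bigr).
\]
Using \eqref{5h} with the displayed choice $\delta=\delta_{m}:=\sqrt{\left\Vert\Re_{m}(\mu)\right\Vert_{\infty}}$, the monotonicity of the positive operator $\Re_{m}$, and $\left(e^{-t}-e^{-\xi}\right)^{2}\le\mu(t)$, I would estimate
\[
\left\vert\Re_{m}\bigl(\phi(t)-\phi(\xi);\xi\bigr)\right\vert\le\Re_{m}\bigl(\left\vert\phi(t)-\phi(\xi)\right\vert;\xi\bigr)\le\hat{\omega}(\phi,\delta_{m})\left(\Re_{m}(\phi_{0};\xi)+\frac{1}{\delta_{m}^{2}}\Re_{m}(\mu;\xi)\right)\le\hat{\omega}(\phi,\delta_{m})\bigl(\left\Vert\Re_{m}\phi_{0}\right\Vert_{\infty}+1\bigr),
\]
where the last inequality uses $\Re_{m}(\mu;\xi)\le\left\Vert\Re_{m}(\mu)\right\Vert_{\infty}=\delta_{m}^{2}$.

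Next I would multiply the identity above by $p_{m}y^{m}\ge 0$, sum over $m\ge 0$ (all series converge for $0<y<R$ by the hypothesis $\sum_{m}\left\Vert\Re_{m}\phi_{0}\right\Vert_{\infty}y^{m}<\infty$), divide by $p(y)$, apply the triangle inequality together with $\left\vert\Re_{m}(g)\right\vert\le\Re_{m}(\left\vert g\right\vert)$, and take the supremum over $\xi\ge 0$, obtaining
\[
\frac{1}{p(y)}\left\Vert\sum_{m=0}^{\infty}\left(\Re_{m}\phi-\phi\right)p_{m}y^{m}\right\Vert_{\infty}\le\frac{1}{p(y)}\sum_{m=0}^{\infty}\hat{\omega}(\phi,\delta_{m})\bigl(\left\Vert\Re_{m}\phi_{0}\right\Vert_{\infty}+1\bigr)p_{m}y^{m}+\frac{H}{p(y)}\left\Vert\sum_{m=0}^{\infty}\left(\Re_{m}\phi_{0}-\phi_{0}\right)p_{m}y^{m}\right\Vert_{\infty}.
\]
By hypothesis \textbf{i)} the last term on the right is $o(\varsigma(y))$ as $0<y\to R^{-}$. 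For the first term on the right, regularity of $P$ and $\phi_{0}\equiv 1$ give $\frac{1}{p(y)}\sum_{m}p_{m}y^{m}=1$, and \textbf{i)} then forces $\frac{1}{p(y)}\sum_{m}\left\Vert\Re_{m}\phi_{0}\right\Vert_{\infty}p_{m}y^{m}$ to stay bounded, since it differs from $1$ by a term of order $o(\varsigma(y))$; hence $\frac{1}{p(y)}\sum_{m}\bigl(\left\Vert\Re_{m}\phi_{0}\right\Vert_{\infty}+1\bigr)p_{m}y^{m}=O(1)$, and, after passing the scales $\delta_{m}$ to the $P$-averaged scale, hypothesis \textbf{ii)} makes the first term $o(\lambda(y))$. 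Combining the two estimates yields $o(\max\{\varsigma(y),\lambda(y)\})=o(\varphi(y))$, which is the assertion; for the Borel weights $p_{m}=1/m!$, $p(y)=e^{y}$, this is exactly the normalization written in the statement.

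I expect the main obstacle to be the bookkeeping for $\delta$: it is tied to $\left\Vert\Re_{m}(\mu)\right\Vert_{\infty}$ and hence carries an implicit $m$-dependence, whereas condition \textbf{ii)} is phrased in the running variable $y$. One therefore has to argue that the $P$-average $\frac{1}{p(y)}\sum_{m}\hat{\omega}(\phi,\delta_{m})\bigl(\left\Vert\Re_{m}\phi_{0}\right\Vert_{\infty}+1\bigr)p_{m}y^{m}$ is dominated by $\hat{\omega}(\phi,\cdot)$ evaluated at a single aggregated scale times a factor that is $O(1)$ uniformly in $y$, and to extract the required boundedness of that factor from \textbf{i)} alone rather than invoking the uniform bound \eqref{1}. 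Once this point is settled, the remainder of the argument is the same computation already carried out in the proof of Theorem~\ref{theorem2}.
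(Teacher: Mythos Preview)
Your approach is essentially the paper's: the same decomposition $\Re_{m}\phi-\phi=\Re_{m}(\phi-\phi(\xi))+\phi(\xi)(\Re_{m}\phi_{0}-1)$, the same application of \eqref{5h}, the same choice $\delta=\sqrt{\left\Vert\Re_{m}(\mu)\right\Vert_{\infty}}$ to absorb the $\mu$-term, and the same final inequality (the paper writes $\Re_{m}\phi_{0}=(\Re_{m}\phi_{0}-\phi_{0})+1$ rather than bounding by $\left\Vert\Re_{m}\phi_{0}\right\Vert_{\infty}$, which is cosmetic). The obstacle you single out---that $\delta$ depends on $m$ while hypothesis \textbf{ii)} is stated in $y$---is real, and the paper's own argument does not resolve it either: it simply pulls $\hat{\omega}(\phi,\delta)$ outside the sum over $m$ as though $\delta$ were independent of $m$.
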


\begin{proof}
For any $\phi \in C_{\divideontimes}[0,\infty)$, any $\xi,t\in \lbrack0,\infty
)$, and any $\delta>0$ we can write%
\begin{align*}
\left \vert \Re_{m}\left(  \phi(t);\xi \right)  -\phi \left(  \xi \right)
\right \vert  &  \leq \Re_{m}(\left \vert \phi \left(  t\right)  -\phi \left(
\xi \right)  \right \vert ;\xi)+\left \vert \phi(\xi)\right \vert \left \vert
\Re_{m}\left(  \phi_{0}(t);\xi \right)  -\phi_{0}(\xi))\right \vert \\
&  \leq \Re_{m}\left(  \left(  1+\dfrac{\left(  e^{-t}-e^{-\xi}\right)  ^{2}%
}{\delta^{2}}\right)  \hat{\omega}(\phi,\delta);\xi \right)  +M\left \vert
\Re_{m}\left(  \phi_{0}(t);\xi \right)  -\phi_{0}(\xi))\right \vert \\
&  \leq \hat{\omega}(\phi,\delta)\Re_{m}\left(  \left(  1+\dfrac{\left(
e^{-t}-e^{-\xi}\right)  ^{2}}{\delta^{2}}\right)  ;\xi \right)  +M\left \vert
\Re_{m}\left(  \phi_{0}(t);\xi \right)  -\phi_{0}(\xi))\right \vert \\
&  \leq \hat{\omega}(\phi,\delta)\left \vert \Re_{m}\left(  \phi_{0}%
(t);\xi \right)  -\phi_{0}(\xi))\right \vert +\frac{1}{\delta^{2}}\hat{\omega
}(\phi,\delta)\Re_{m}\left(  \mu \left(  t\right)  ;\xi \right)  +\hat{\omega
}(\phi,\delta)\\
&  +M\left \vert \Re_{m}\left(  \phi_{0}(t);\xi \right)  -\phi_{0}%
(\xi))\right \vert.
\end{align*}
Since $\delta=\sqrt{\left \Vert \Re_{m}\left(  \mu\right)
\right \Vert _{\infty}}$ we get
\begin{align*}
\frac{1}{p(y)}\left \Vert \sum \limits_{m=0}^{\infty}\left(  \Re_{m}\phi
-\phi \right)  \frac{y^{m}}{m!}\right \Vert _{\infty} &  \leq \hat{\omega}%
(\phi,\delta)\frac{1}{p(y)}_{\infty}\left \Vert \sum \limits_{m=0}^{\infty}%
(\Re_{m}\left(  \phi_{0}\right)  -\phi_{0})\frac{y^{m}}{m!}\right \Vert
_{\infty}\\
&  +2\hat{\omega}(\phi,\delta)+M\frac{1}{p(y)}_{\infty}\left \Vert
\sum \limits_{m=0}^{\infty}((\Re_{m}\left(  \phi_{0}\right)  -\phi_{0}%
)\frac{y^{m}}{m!}\right \Vert _{\infty},
\end{align*}
which completes the proof.
\end{proof}

The following theorem gives the rate of convergence via the the integral summability method.

\begin{theorem}
\label{rate2} Let $\mathcal{F}
$ be a non-negative regular integral summability method, $P$ be a non-polynomial Borel-type power series method and let $\left\{  \Re_{m}\right\}
$ be a sequence of positive linear operators from $C_{\divideontimes}%
[0,\infty)$ to $B_{\divideontimes}[0,\infty)$ that satisfies (\ref{1}). Assume
that $\pi_{1},$ $\pi_{2}$ are positive non-increasing functions defined
on $[0,\infty).$ If\newline \textbf{i)} $\left \Vert
\mathcal{F}
_{p,\Re}^{s}\left(  \phi_{0}\right)  -\phi_{0}\right \Vert _{\infty}=o(\pi
_{1}(s))$ as $s\rightarrow \infty$,\newline \textbf{ii)} $\hat{\omega}(\phi
,\delta)=o(\pi_{2}(s))$, as $s\rightarrow \infty$, where $\delta=\sqrt{\left \Vert
\Re_{m}\left(  \mu\right)  \right \Vert _{\infty}}$ with
$\mu(t)=\displaystyle \sup_{\xi \geq0}\left(  e^{-t}-e^{-\xi
}\right)  ^{2}.$ Then, for any $\phi \in$ $C_{\divideontimes}[0,\infty)$%
\[
\left \Vert
\mathcal{F}%
_{p,\Re}^{s}\phi-\phi \right \Vert _{\infty}=o(\pi(s))
\]
here $\pi(s)=\displaystyle\max_{s\geq0}\left \{  \pi_{1}(s),\pi_{2}(s)\right \}  $.
\end{theorem}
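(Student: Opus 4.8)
The plan is to run the argument of Theorems~\ref{theorem1} and~\ref{rate1}, now carrying quantitative control through the modulus $\hat{\omega}$. Fix $\phi\in C_{\divideontimes}[0,\infty)$ and put $H=\|\phi\|_{\infty}$. Starting from the pointwise inequality \eqref{5h}, I would apply $\Re_{m}$ in the variable $t$, use linearity and positivity, and use the trivial bound $(e^{-t}-e^{-\xi})^{2}\le\mu(t)$, so that
\[
\Re_{m}\bigl(|\phi(t)-\phi(\xi)|;\xi\bigr)\le\hat{\omega}(\phi,\delta)\Bigl(\Re_{m}(\phi_{0};\xi)+\tfrac{1}{\delta^{2}}\Re_{m}(\mu;\xi)\Bigr)\le\hat{\omega}(\phi,\delta)\bigl(\Re_{m}(\phi_{0};\xi)+1\bigr)
\]
once $\delta$ is taken to be $\sqrt{\|\Re_{m}(\mu)\|_{\infty}}$. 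Combining this with the identity $\Re_{m}(\phi;\xi)-\phi(\xi)=\Re_{m}\bigl(\phi(t)-\phi(\xi);\xi\bigr)+\phi(\xi)\bigl(\Re_{m}(\phi_{0};\xi)-1\bigr)$, which keeps the genuine test-function defect separated since $\Re_{m}$ need not fix $\phi_{0}$, is the per-index starting point, exactly as in the proof of Theorem~\ref{rate1}.

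Next I would transport this through the two summability layers. Multiplying by $p_{m}y^{m}/p(y)\ge0$, summing over $m$, and using $\tfrac{1}{p(y)}\sum_{m}p_{m}y^{m}=1$ together with the definition \eqref{3} of $\mathcal{V}_{P,\Re}^{y}$ produces, after bounding $|\Re_{m}(\phi(t)-\phi(\xi);\xi)|\le\Re_{m}(|\phi(t)-\phi(\xi)|;\xi)$, an estimate of $\mathcal{V}_{P,\Re}^{y}\phi-\phi$ by $\hat{\omega}(\phi,\delta)\bigl(\mathcal{V}_{P,\Re}^{y}\phi_{0}(\xi)+1\bigr)+\phi(\xi)\bigl(\mathcal{V}_{P,\Re}^{y}\phi_{0}(\xi)-1\bigr)$. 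Integrating against the non-negative kernel $\mathcal{F}(s,\cdot)$ and invoking \eqref{22} then gives the estimate for $\mathcal{F}_{P,\Re}^{s}$; the interchange of $\int_{0}^{\infty}\mathcal{F}(s,y)\,dy$ with the series is justified by the uniform convergence of the series in $\xi$ and domination by $M\,\mathcal{F}(s,\cdot)$, exactly as in the proposition preceding Theorem~\ref{theorem1}. Using $\mathcal{V}_{P,\Re}^{y}\phi_{0}(\xi)\le M$ from \eqref{1}, $K:=\sup_{s\ge0}\int_{0}^{\infty}\mathcal{F}(s,y)\,dy<\infty$ from regularity, and $\int_{0}^{\infty}\mathcal{F}(s,y)\mathcal{V}_{P,\Re}^{y}\phi_{0}(\xi)\,dy=\mathcal{F}_{P,\Re}^{s}\phi_{0}(\xi)$, the first group is dominated by $\hat{\omega}(\phi,\delta)K(M+1)$ and the second by $H\bigl(\|\mathcal{F}_{P,\Re}^{s}\phi_{0}-\phi_{0}\|_{\infty}+|\int_{0}^{\infty}\mathcal{F}(s,y)\,dy-1|\bigr)$. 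Taking the supremum over $\xi\ge0$ I reach
\[
\|\mathcal{F}_{P,\Re}^{s}\phi-\phi\|_{\infty}\le K(M+1)\,\hat{\omega}(\phi,\delta)+H\,\|\mathcal{F}_{P,\Re}^{s}\phi_{0}-\phi_{0}\|_{\infty}+H\Bigl|\int_{0}^{\infty}\mathcal{F}(s,y)\,dy-1\Bigr|.
\]

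To finish, hypothesis~(i) gives $H\|\mathcal{F}_{P,\Re}^{s}\phi_{0}-\phi_{0}\|_{\infty}=o(\pi_{1}(s))$, hypothesis~(ii) gives $K(M+1)\hat{\omega}(\phi,\delta)=o(\pi_{2}(s))$, and the normalisation remainder $H|\int_{0}^{\infty}\mathcal{F}(s,y)\,dy-1|$ is $o(1)$ by \eqref{21} and, being exactly the defect $\mathcal{F}_{P,\Re}^{s}\phi_{0}-\phi_{0}$ in the degenerate case $\Re_{m}\phi_{0}\equiv1$, is absorbed into~(i). Since $\pi_{1},\pi_{2}$ are positive and non-increasing, each term is $o(\pi(s))$ with $\pi(s)=\max\{\pi_{1}(s),\pi_{2}(s)\}$, which is the assertion. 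I expect the main obstacle to be precisely this last stage: arranging the $o(\cdot)$ bookkeeping consistently while keeping the regularity defect $\int_{0}^{\infty}\mathcal{F}(s,y)\,dy-1$ separated from, and controlled by, the test-function defect in~(i); the earlier steps amount to the routine propagation of a single modulus-of-continuity estimate through $\mathcal{V}_{P,\Re}^{y}$ and $\mathcal{F}_{P,\Re}^{s}$, of the kind already carried out in the earlier proposition on $\mathcal{F}_{P,\Re}^{s}$ and in the proof of Theorem~\ref{rate1}.
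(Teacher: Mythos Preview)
Your layer-by-layer propagation introduces exactly the obstacle you flag, and your proposed resolution of it does not go through. The normalisation remainder $H\bigl|\int_0^\infty \mathcal{F}(s,y)\,dy - 1\bigr|$ is only $o(1)$ by \eqref{21}; the ``degenerate case'' remark says that \emph{if} every $\Re_m$ fixed $\phi_0$ then this remainder would equal the defect in hypothesis~(i), but for general $\{\Re_m\}$ hypothesis~(i) carries no information about the rate at which $\int_0^\infty \mathcal{F}(s,y)\,dy \to 1$. Since $\pi_1,\pi_2$ are positive non-increasing and may tend to zero, an $o(1)$ term need not be $o(\pi(s))$, so your displayed inequality does not yield the conclusion. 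A second, smaller issue: specializing $\delta=\sqrt{\|\Re_m\mu\|_\infty}$ at the $\Re_m$ level makes $\hat{\omega}(\phi,\delta)$ depend on $m$, so it cannot be pulled outside $\sum_m p_m y^m(\cdots)$.

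Both difficulties disappear if you treat $\mathcal{F}_{P,\Re}^s$ as a single positive linear operator, which is precisely the paper's route. One writes $\mathcal{F}_{P,\Re}^s\phi(\xi)-\phi(\xi)=\mathcal{F}_{P,\Re}^s\bigl(\phi(t)-\phi(\xi);\xi\bigr)+\phi(\xi)\bigl(\mathcal{F}_{P,\Re}^s\phi_0(\xi)-1\bigr)$, applies \eqref{5h} to the first term with $\delta$ kept free, and only then specializes $\delta=\sqrt{\|\mathcal{F}_{P,\Re}^s\mu\|_\infty}$ (the $\Re_m$ in the statement's definition of $\delta$ is evidently a misprint for $\mathcal{F}_{P,\Re}^s$, consistent both with the $s$-dependence required in hypothesis~(ii) and with the paper's own proof). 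This gives
\[
\|\mathcal{F}_{P,\Re}^s\phi-\phi\|_\infty\le\hat{\omega}(\phi,\delta)\,\|\mathcal{F}_{P,\Re}^s\phi_0-\phi_0\|_\infty+2\hat{\omega}(\phi,\delta)+H\,\|\mathcal{F}_{P,\Re}^s\phi_0-\phi_0\|_\infty,
\]
with no extraneous remainder. Your approach can be repaired to reach the same bound by keeping the signed term $\phi(\xi)\bigl(\mathcal{V}_{P,\Re}^y\phi_0(\xi)-1\bigr)$ intact when integrating in $y$: then $\phi(\xi)\bigl[\mathcal{F}_{P,\Re}^s\phi_0(\xi)-\int_0^\infty\mathcal{F}(s,y)\,dy\bigr]$ and $\phi(\xi)\bigl[\int_0^\infty\mathcal{F}(s,y)\,dy-1\bigr]$ combine to $\phi(\xi)\bigl(\mathcal{F}_{P,\Re}^s\phi_0(\xi)-1\bigr)$ and the remainder cancels---but at that point you have simply recovered the direct argument.
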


\begin{proof}
For any $\phi \in C_{\divideontimes}[0,\infty)$, any $\xi,t\in \lbrack a,b]$, and
any $\delta>0$ we can write%
\begin{align*}
\left \vert ((%
\mathcal{F}%
_{P,\Re}^{s}\phi);\xi)-\phi \left(  \xi \right)  \right \vert  &  \leq%
\mathcal{F}%
_{P,\Re}^{s}(\left \vert \phi \left(  t\right)  -\phi \left(  \xi \right)
\right \vert ;\xi)+\left \vert \phi(\xi)\right \vert \left \vert
\mathcal{F}%
_{P,\Re}^{s}\left(  \phi_{0}(t);\xi \right)  -\phi_{0}(\xi))\right \vert \\
&  \leq%
\mathcal{F}%
_{P,\Re}^{s}\left(  \left(  1+\dfrac{\left(  e^{-t}-e^{-\xi}\right)  ^{2}%
}{\delta^{2}}\right)  \hat{\omega}(\phi,\delta);\xi \right)  +M\left \vert
\mathcal{F}%
_{P,\Re}^{s}\left(  \phi_{0}(t);\xi \right)  -\phi_{0}(\xi))\right \vert \\
&  \leq \hat{\omega}(\phi,\delta)%
\mathcal{F}%
_{P,\Re}^{s}\left(  \left(  1+\dfrac{\left(  e^{-t}-e^{-\xi}\right)  ^{2}%
}{\delta^{2}}\right)  ;\xi \right)  +M\left \vert
\mathcal{F}%
_{P,\Re}^{s}\left(  \phi_{0}(t);\xi \right)  -\phi_{0}(\xi))\right \vert \\
&  \leq \hat{\omega}(\phi,\delta)\left \vert
\mathcal{F}%
_{P,\Re}^{s}\left(  \phi_{0}(t);\xi \right)  -\phi_{0}(\xi)\right \vert
+\frac{1}{\delta^{2}}\hat{\omega}(\phi,\delta)%
\mathcal{F}%
_{P,\Re}^{s}\left(  \mu \left(  t\right)  ;\xi \right)  +\hat{\omega}%
(\phi,\delta)\\
&  +M\left \vert
\mathcal{F}%
_{P,\Re}^{s}\left(  \phi_{0}(t);\xi \right)  -\phi_{0}(\xi))\right \vert.
\end{align*}
As $\delta=\sqrt{\left \Vert
\mathcal{F}%
_{P,\Re}^{s}\left(  \mu\right)  \right \Vert _{\infty}}$ we
get
\begin{align*}
\frac{1}{p(y)}\left \Vert \sum \limits_{m=0}^{\infty}\left(  \Re_{m}\phi
-\phi \right)  \frac{y^{m}}{m!}\right \Vert _{\infty} &  \leq \hat{\omega}%
(\phi,\delta)\frac{1}{p(y)}_{\infty}\left \Vert \sum \limits_{m=0}^{\infty}%
(\Re_{m}\left(  \phi_{0}\right)  -\phi_{0})\frac{y^{m}}{m!}\right \Vert
_{\infty}\\
&  +2\hat{\omega}(\phi,\delta)+M\frac{1}{p(y)}_{\infty}\left \Vert
\sum \limits_{m=0}^{\infty}((\Re_{m}\left(  \phi_{0}\right)  -\phi_{0}%
)\frac{y^{m}}{m!}\right \Vert _{\infty},
\end{align*}
which implies%
\[
\left \Vert
\mathcal{F}%
_{P,\Re}^{s}\phi-\phi \right \Vert _{\infty}\leq \hat{\omega}(\phi,\delta
)\left \Vert
\mathcal{F}%
_{P,\Re}^{s}\phi_{0}-\phi_{0}\right \Vert _{\infty}+2\hat{\omega}(\phi
,\delta)+M\left \Vert
\mathcal{F}%
_{P,\Re}^{s}\phi_{0}-\phi_{0}\right \Vert _{\infty}.
\]
So, the proof is ended.
\end{proof}

\section{Conclusion}\label{sec6}
In this study, we have established Korovkin-type approximation theorems that preserve exponential functions by employing power series convergence and its special case, the Borel method. Moreover, by introducing approximation through Borel-type power series convergence via integral summability, we have proposed an alternative framework that remains valid in cases where classical or ordinary Borel convergence fails. This formulation enhances the applicability of Korovkin-type results on infinite intervals and provides stronger convergence behavior supported by quantitative estimates through the modulus of continuity. The theoretical results were further validated by illustrative examples, confirming the efficiency of the proposed approach. Future research may explore the extension of the present results to broader classes of summability methods, such as matrix or fractional-type summability. It would also be of interest to study analogous Korovkin-type theorems for other classes of functions and to investigate possible connections with statistical convergence, or operators defined on weighted function spaces.


\begin{thebibliography}{99} 

\bibitem {acar-aral-gonska}T. Acar, A. Aral, H. Gonska, On Sz\'{a}sz--Mirakyan
Operators Preserving $e^{2ax}$, $a>0$, Mediterranean journal of mathematics,
14 (2017)(1), 1-14.

\bibitem {acar-aral-}T. Acar, A. Aral, D. C\'{a}rdenas-Morales,
Sz\'{a}sz--Mirakyan Type Operators Which Fix Exponentials. Results Math 72
(2017), 1393--1404.

\bibitem {acu-tachev}A. M. Acu, G. Tachev, Yet Another New Variant of
Sz\'{a}sz--Mirakyan Operator, Symmetry, 2021, 13(2018).

\bibitem{altomare} F. Altomare, M. Campiti, Korovkin-type approximation theory and its applications. Walter de Gruyter,
Berlin (1994)
\bibitem {aral-inoan-rasa}A. Aral, D. Inoan, I. Ra\c{s}a Approximation
properties of Sz\'{a}sz--Mirakyan operators preserving exponential functions.
Positivity 23 (2019), 233--246.

\bibitem {atlihan-unver}\"{O}. G. Atl\i han, M. \"{U}nver. Abel transforms of
convolution operators, Georgian Mathematical Journal 22 (3) (2015), 323-329.


\bibitem {boos}J. Boos, Classical and Modern methods in summability. Oxford,
Oxford University Press, 2000.

\bibitem {boyanov-veselinov}B. D. Boyanov, V. M. Veselinov, A note on the
approximation of functions in an infinite interval by linear positive
operators,\ Bulletin Mathematique de la Societe des Sciences Mathematiques de
Roumanie, 14 (1970) (62), 9--13.

\bibitem {connor}J. Connor, M. A. Swardson, Strong integral summability and
the Stone-\v{C}ech compactification of the half-line, Pacific Journal of
Mathematics, 157(2)(1993), 201-224.

\bibitem {dinar-kadim}R. Dinar, T. Yurdakadim, Approximation properties of
convolution operators via statistical convergence based on a power series
Communications Faculty of Sciences University of Ankara Series A1 Mathematics
and Statistics 74 (1)(2025), 92-102.

\bibitem {duman2007}O. Duman, A Korovkin type Approximation Theorems via
$I$-Convergence, Czechoslovak Math. J., 57 (132)(2007), 367-375.

\bibitem {gadjiev-orhan}A. D. Gadjiev, C. Orhan, Some approximation theorems
via statistical convergence. Rocky Mountain Journal of Mathematics, 32 (2002), 129-138.


\bibitem {gupta-aral}V. Gupta, A. Aral, A note on
Sz\'{a}sz--Mirakyan--Kantorovich type operators preserving $e^{-x}$,
Positivity, 22(2018), 415--423.

\bibitem {gupta-tachev}V. Gupta, G. Tachev On approximation properties of
Phillips operators preserving exponential functions, Mediterranean Journal of
Mathematics, 14 (2017):177.

\bibitem {gupta-acu}V. Gupta, A. M. Acu, On Baskakov--Sz\'{a}sz--Mirakyan-type
operators preserving exponential type functions, Positivity, 22 (3)(2018), 919-929.

\bibitem {holhos}A. Holhos, The rate of approximation of functions in an
infnite interval by positive linear operators, Studia Universitatis Babes
Bolyai Mathematica, 2(2010), 133--142.

\bibitem {holhos2018}A. Holho\c{s}, Quantitative estimates of Voronovskaya
type in weighted spaces. Results in Mathematics, 73(2) (2018), 53.



\bibitem{korovkin} P.P. Korovkin, Linear operators and approximation theory, p. 8. Hindustan Publ. Co., Delhi (1960)

\bibitem {musaleen-moh}M. Mursaleen, S.A. Mohiuddine, Korovkin type
approximation theorem for almost and statistical convergence, Nonlinear
Analysis: Stability, Approximation, and Inequalities (2012)487-494 Springer
New York 2012.


\bibitem {ozsarac-acar}F. \"{O}zsarac, T. Acar, Reconstruction of Baskakov
operators preserving some exponential functions, Mathematical Methods in the
Applied Sciences, 42 (16) (2019), 5124-5132.

\bibitem {powell}R. E. Powell, S. M.Shah,  Summability Theory and Its
Applications, Prentice-Hall of India, New Delhi, 1988.

\bibitem {sakaoglu-unver}\.{I}. Sakaoglu, M. \"{U}nver Statistical
approximation for multivariable integrable functions, Miskolc Mathematical
Notes, 13(2012), 485-491.

\bibitem {soy-unver2017}D. S\"{o}ylemez, M. \"{U}nver, Korovkin type theorems
for Cheney--Sharma Operators via summability methods, Results Math. 73 (2017), 1601--1612.

\bibitem {soylemez-unver-2020}D. S\"{o}ylemez, M. \"{U}nver,  Korovkin type
approximation of Abel transforms of $q$-Meyer K\"{o}nig and Zeller operators,
Int. J. Nonlinear Anal. Appl. 11(2) (2020), 339-350.

\bibitem {soylemez-un2021}D. S\"{o}ylemez, M. \"{U}nver, Rates of Power Series
Statistical Convergence of Positive Linear Operators and Power Series
Statistical Convergence of -Meyer--K\"{o}n\.{i}g and Zeller Operators,
Lobachevskii Journal of Mathematics 42 (2) (2021), 426-434.

\bibitem {soy-guven}D. S\"{o}ylemez, E. G\"{u}ven , A Note on Approximation
Properties of Bernstein-type Operators via Some Summability Methods, Turkish
Journal of Mathematics and Computer Science 16 (2), 358-366.

\bibitem {soy-2025}D. S\"{o}ylemez, Some Korovkin type approximation theorems
for multivariate Bernstein type rational operators via ummability methods,
Bulletin of Mathematical Analysis \& Applications 17.(1) (2025), 17-31.

\bibitem {sumbul-yildirim}C S\"{u}mb\"{u}l, C Belen, M Y\i ld\i r\i m, On statistical limit points with respect to power series methods and modulus functions,
Communications Faculty of Sciences University of Ankara Series A1 Mathematics
and Statistics 72 (2) (2023), 438-448.

\bibitem {tas-atlihan}E. Ta\c{s}, \"{O}. G. Atlihan, Korovkin type
approximation theorems via power series method. S\~{A}\pounds o Paulo Journal
of Mathematical Sciences, 13(2) (2019), 696-707.

\bibitem {tas-yrd-atl2018}E. Ta\c{s}, T. Yurdakadim, \"{O}. G.Atl\i han,
Korovkin type approximation theorems in weighted spaces via power series
method, Oper. Matr. 12 (2) (2018), 529-535.

\bibitem {ulucay-un}H. Ulu\c{c}ay, M. \"{U}nver, D. S\"{o}ylemez, Some
Korovkin type approximation applications of power series methods, Revista de
la Real Academia de Ciencias Exactas, F\'{\i}sicas y Naturales. Serie A.
Matem\'{a}ticas 117(1)(2023)1-24.

\bibitem {unver}M. \"{U}nver, Abel transforms of positive linear operators.
In: ICNAAM 2013. AIP Conference Proceedings, 1558 (2013), 1148-1151.

\bibitem {unver2}M. \"{U}nver, Abel transforms of positive linear operators on
weighted spaces. Bull. Belg. Math. Soc. Simon Stevin, 21(5) (2014), 813-822.

\bibitem {unver-orhan2019}M. \"{U}nver, C. Orhan, Statistical convergence
with respect to power series methods and applications to approximation theory.
Journal Numerical Functional Analysis and Optimization. 40 (5), 535-547, 2019.

\bibitem {ovgu-ali}O. G. Yilmaz, V. Gupta, A. Aral, On Baskakov Operators
Preserving the Exponential Function , 46 (2) (2017), 150-161.

\bibitem {yildiz}S. Y\i ld\i z, K. Demirci, F. Dirik,  Approximation Results
on an Infinite Interval Based on Power Series Statistical Sense, Dolomites
Research Notes on Approximation , 18 (2) (2025), 17-24.

\bibitem {yildiz-demirci-2025}S. Y\i ld\i z, K. Demirci, F. Dirik.
Approximation theorems via power series statistical convergence and
applications for sequences of monotone and sublinear operators,  Revista de la
Real Academia de Ciencias Exactas, F\'{\i}sicas y Naturales. Serie A.
Matem\'{a}ticas 119.4 (2025): 100.

\bibitem {yurd-2016}T. Yurdakad\i m,  Some Korovkin type results via power
series method in modular spaces, Communications Faculty of Sciences
University of Ankara Series A1 Mathematics and Statistics 65.2 (2016): 65-76.
\end{thebibliography}
\end{document}